\newtheorem{theorem}{Theorem}[section]
\newtheorem{proposition}[theorem]{Proposition}
\newtheorem{lemma}[theorem]{Lemma}
\newtheorem{corollary}[theorem]{Corollary}
\theoremstyle{definition}
\newtheorem{remark}[theorem]{Remark}
\begin{document}
\title{\textbf{Distance-regular graphs with a few $q$-distance eigenvalues}}

\author[a]{Mamoon Abdullah}

\author[a,$_{\footnote{Corresponding author}}$]{Brhane Gebremichel}

\author[b]{Sakander Hayat}

\author[a,c]{Jack H. Koolen}

\affil[a]{\emph{School of Mathematical Sciences,
University of Science and Technology of China,
Hefei, Anhui, 230026, PR China.}}

\affil[b]{\emph{Faculty of Science, Universiti Brunei Darussalam,
Jln Tungku Link, Gadong BE1410, Brunei Darussalam.}}

\affil[c]{\emph{CAS Wu Wen-Tsun Key Laboratory of Mathematics,
University of Science and Technology of China,
Hefei, Anhui, 230026, PR China.}}

\date{}
\maketitle
\newcommand\blfootnote[1]{%
\begingroup
\renewcommand\thefootnote{}\footnote{#1}%
\addtocounter{footnote}{-1}%
\endgroup}
\blfootnote{E-mail addresses: {\tt MamoonAbdullah@hotmail.com} (M. Abdullah), {\tt brhane@ustc.edu.cn} (B. Gebremichel), {\tt sakander1566@gmail.com} (S. Hayat), {\tt koolen@ustc.edu.cn} (J. H. Koolen).}

\begin{abstract}
 In this paper we study when the $q$-distance matrix of a distance-regular graph has few distinct eigenvalues. We mainly concentrate on diameter $3$.
\end{abstract}

\textbf{Keywords} : Distance-regular graphs, distance-matrix, $q$-distance matrix, few $q$-distance eigenvalues.

\textbf{Mathematics Subject Classification:} 05C50, 05E30.

\section{Introduction}

In this paper all the graphs are finite, undirected and simple.
For undefined notations and terminologies, see \cite{BH12}.
Let $G$ be a connected graph. For $q\neq 0$, define the $q$-distance matrix $\mathbb{D}_q(G)$ by $(\mathbb{D}_q(G))_{xy} = 1 + \frac{1}{q} + \cdots + \frac{1}{q^{d-1}}$ if $x,y \in V(G)$ and $d(x,y)=d \geq 1$ and $(\mathbb{D}_q(G))_{xx} =0$ for all $x \in V(G)$. In \cite{KMGH2023}, it was shown that for a distance-regular graph $G$ with classical parameters $(D,b,\alpha, \beta)$, its $b$-distance-matrix has exactly $3$ distinct eigenvalues of which one is equal to $0$. Examples of distance-regular graphs with classical parameters are
the Johnson graphs, Hamming graphs, Grassmann graphs, bilinear forms graphs, among others. For more information about these graphs, we refer to \cite[Chapters 6 and 9]{BCN}.

Many people have studied when the distance matrix (that is the $1$-distance matrix) has a small number of distinct eigenvalues, see \cite{AAB2016,AAKS2017,AP2015, AP2015-2,HHL2018,KHI2016,KS1994,LHWS2013,Zhang2021,ZL2023}.

In this paper, we study distance-regular graphs with diameter $3$ whose $\mathbb{D}_q$-matrix has at most $3$ distinct eigenvalues. When $q > 0$ or $q \leqslant -1$, the $q$-distance matrix $\mathbb{D}_q$ has at least $3$ distinct eigenvalues, and we give sufficient and necessary conditions (see Corollary $3.6.$) for when $\mathbb{D}_q$ has exactly $3$ distinct eigenvalues. By applying our result to $q=1$, this solves Problem $4.1$ and Problem $4.2$ of Atik and Panigrahi \cite{AP2015} for the class of distance-regular graphs with diameter $3$. Also, we give a new infinite family of distance-regular graphs with diameter $3$ such that the distance matrix has exactly $3$ distinct distance eigenvalues.
Furthermore, we study distance-regular graph $G$ with diameter $3$ such that $\mathbb{D}_q(G)$ has exactly $2$ distinct eigenvalues. Moreover, when $G$ is antipodal we give a classification. In the last section, we show with a new proof that an antipodal distance-regular graph with diameter $D$ has quite fewer than $D$ distinct distance eigenvalues, a result that was first shown by Atik and Panigrahi \cite{AP2015}.


\section{Preliminaries}
A \emph{graph} $G$ is a pair $(V(G),E(G))$, where $V(G)$ is called the \emph{vertex set} and $E(G)\subseteq{V(G)\choose2}$
is called the \emph{edge set} of $G$. Two vertices $u,v\in V(G)$ are said to be \emph{adjacent}
if $uv\in E(G)$ and we denote this by $x \sim y$. The valency $d_u$ of a vertex $u\in V(G)$ is defined as $d_u=|\{v\in V(G):uv\in E(G)\}|$. A graph is said to be \emph{$k$-regular}, if $d_u=k$ for each $u\in V(G)$. A graph is called \emph{regular} if it is $k$-regular for some $k$.
Let $G$ be a connected graph. The {\em distance} between two vertices $u,v\in V(G)$ in graph $G$, denoted by $d(x,y)$, is the length of a shortest path between $u$ and $v$.
The {\em diameter} $D$ of $G$ is defined as $D=\max\{d(u,v):u,v\in V(G)\}$. A graph is called \emph{bipartite} if its vertex set can be partitioned into two parts such that every edge in the graph has one end (vertex) in each part. A \emph{subgraph} of a graph $G$ is a graph $H$ such that $V(H)\subseteq V(G)$ and $E(H)\subseteq E(G)$. If $E(H)=\binom{V(H)}{2}\cap E(G)$, then we say $H$ is an \emph{induced subgraph} of $G$.

A real $(n \times n)$-matrix $T$ with only non-negative entries, is called \emph{irreducible} if for all $1 \leq i, j \leq n$, there exists $k >0$ such that $(T^k)_{ij} >0$.
If $M$ (resp. $N$) is a real symmetric $m\times m$ (resp. $n\times n$) matrix, let $\eta_1(M)\geqslant\eta_2(M)\geqslant\cdots\geqslant\eta_m(M)$ (resp. $\eta_1(N)\geqslant\eta_2(N)\geqslant\cdots\geqslant\eta_n(N)$) denote the eigenvalues of $M$ (resp. $N$) in nonincreasing order. Assume $m\leqslant n$. We say that the eigenvalues of $M$ \emph{interlace} the eigenvalues of $N$, if $$\eta_{n-m+i}(N)\leqslant\eta_i(M)\leqslant\eta_i(N)$$
for each $i=1,\ldots,m$. The following result is a special case of interlacing.

\begin{lemma}[{Cf.\cite[Theorem 9.1.1]{GD01}}]\label{interlacing}
Let $B$ be a real symmetric $n\times n$ matrix and $C$ a principal submatrix of $B$ of order $m$, where $m<n$. Then the eigenvalues of $C$ interlace the eigenvalues of $B$.
\end{lemma}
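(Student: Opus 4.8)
The plan is to deduce the statement from the Courant--Fischer min--max characterisation of the eigenvalues of a real symmetric matrix, which may be taken as known. Write $B$ for the $n\times n$ matrix, and after permuting rows and columns of $B$ simultaneously (which changes neither the spectrum of $B$ nor that of the principal submatrix in question) assume that $C$ is the leading $m\times m$ block of $B$. Identify $\mathbb{R}^m$ with the coordinate subspace $U=\{x\in\mathbb{R}^n : x_{m+1}=\cdots=x_n=0\}$ of $\mathbb{R}^n$; the point of this identification is that for every $x\in U$ one has $x^{\top}Bx=\hat{x}^{\top}C\hat{x}$ and $x^{\top}x=\hat{x}^{\top}\hat{x}$, where $\hat{x}\in\mathbb{R}^m$ is the vector of the first $m$ coordinates of $x$. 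Thus, under this identification, the Rayleigh quotient of $B$ restricted to $U$ coincides with the Rayleigh quotient of $C$.

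First I would prove the upper bounds $\eta_i(C)\le\eta_i(B)$ for $1\le i\le m$. By Courant--Fischer (max--min form) applied to $C$, there is an $i$-dimensional subspace $S_0\subseteq U$ on which the Rayleigh quotient of $C$ is everywhere at least $\eta_i(C)$; regarded as an $i$-dimensional subspace of $\mathbb{R}^n$, this same $S_0$ is admissible in the max--min formula for $\eta_i(B)$, and on it the Rayleigh quotient of $B$ is everywhere at least $\eta_i(C)$ by the identification above. Hence $\eta_i(B)\ge\eta_i(C)$.

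For the lower bounds $\eta_{n-m+i}(B)\le\eta_i(C)$ I would use the dual (min--max) form of Courant--Fischer. It yields an $(m-i+1)$-dimensional subspace $T_0\subseteq U$ on which the Rayleigh quotient of $C$ is everywhere at most $\eta_i(C)$. Since $n-(n-m+i)+1=m-i+1$, the subspace $T_0$, viewed in $\mathbb{R}^n$, has exactly the dimension required in the min--max formula for $\eta_{n-m+i}(B)$, so $\eta_{n-m+i}(B)\le\max_{0\ne x\in T_0} x^{\top}Bx/x^{\top}x \le \eta_i(C)$. Combining the two families of inequalities gives $\eta_{n-m+i}(B)\le\eta_i(C)\le\eta_i(B)$ for $i=1,\dots,m$, which is the asserted interlacing.

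The argument involves no real obstacle beyond bookkeeping: the only points that need care are matching the dimension $m-i+1$ of the optimal subspace for $C$ with the index shift $n-m+i$ in the spectrum of $B$, and checking that the coordinate-subspace identification preserves Rayleigh quotients, which is immediate from the block form of $B$. If one prefers to avoid Courant--Fischer, an alternative route is to reduce to the case $m=n-1$ and argue by an eigenvalue-counting (rank) argument on $B-\lambda I$, then iterate; but the variational proof above is shorter and handles arbitrary $m<n$ at once.
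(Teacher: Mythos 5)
The paper does not actually prove this lemma: it is stated as a quoted classical result, with a pointer to Theorem 9.1.1 of Godsil and Royle, so there is no internal proof to compare your argument against. Your Courant--Fischer proof is correct and complete: the identification of the Rayleigh quotient of $B$ restricted to the coordinate subspace $U$ with the Rayleigh quotient of $C$ is exactly what is needed, the max--min form gives $\eta_i(C)\le\eta_i(B)$ by feeding the optimal $i$-dimensional subspace for $C$ into the formula for $B$, and the dimension count $n-(n-m+i)+1=m-i+1$ correctly matches the optimal $(m-i+1)$-dimensional subspace for $\eta_i(C)$ with the min--max formula for $\eta_{n-m+i}(B)$. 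This is the standard variational proof of Cauchy interlacing (the cited reference argues with the same underlying idea, via eigenspaces of complementary dimensions forced to intersect), and nothing is missing beyond the statement of Courant--Fischer itself, which you are entitled to assume.
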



Let $G$ be a connected regular graph with diameter $D$. For $1\le i\le D$, we define $G_{i}(u):=\{v\in G\mid d(u,v)=i\}$. We write $G(u)=G_{1}(u)$.
The local graph of $G$ with respect to $u\in V(G)$ is the subgraph of $G$, induced by $G(u)$.
The {\em distance-}$i$ graph of $G$ is the graph induced by $G_i$, for $i=0,1,\ldots,D$.
The graph $G$ is called {\em distance-regular}, if there are numbers $a_i,b_i,c_i~(0\leq i\leq D)$, such that if $d(x,y)=i$, then
\begin{equation*}
\mid G_{i-1}(x)\cap G(y)\mid =c_{i},~~\mid G_{i}(x)\cap G(y) \mid =a_{i},~~\mid G_{i+1}(x)\cap G(y)\mid =b_{i}.
\end{equation*}
Note that $G$ is $b_0$-regular. It follows that, $a_i + b_i + c_i =b_0$ and $c_0=b_D=0$. Usually, we write $k = b_0$. 
The sequence
\begin{equation}\label{intersection array}
  \{ b_0, b_1, \ldots, b_{D-1} ; c_1=1, c_2, \ldots ,c_D \}
\end{equation}
is called the intersection array of the distance-regular graph $G$.

For a distance-regular graph $G$, the adjacency matrix $A_i$ of $G_i$ is called the {\em distance-$i$ matrix} of $G$, for $i=0,1,\ldots,D$.
The matrices $A_i$'s satisfy the following relations {\cite[Section 4.1]{BCN}}
\begin{equation}\label{recurrence relation}
  A_0=I, \text{ } A_1=A, \text{ } AA_i = c_{i+1}A_{i+1}  +a_{i}A_{i} +b_{i-1}A_{i-1}, \text{ for } i= 0, 1, \ldots , D,
\end{equation}
where $A_{-1} =A_{D+1} = \textbf{0}$, while the numbers $b_{-1}$ and $c_{D +1}$ are unspecified.
Furthermore $A_0 + A_1 + \cdots +A_D= J$ holds.

A distance-regular graph $G$ is said to be \emph{antipodal}, whenever for any vertices $u,v,w$ such
that $d(u,v)=d(u,w)=D$ implies that $d(v,w)=D$ or $v=w$, that is, the relation of being at distance $D$ or
zero is an equivalence relation on $V(G)$, and the equivalence classes are called \emph{antipodal classes} or \emph{fibers}.
We say that $G$ is an antipodal $r$-cover if the equivalence classes have size $r$. Moreover, when $G$ is antipodal of diameter $D$ then we can define a new graph $\tilde{G}$, the \emph{folded graph} of $G$, which has the antipodal classes of $G$ as vertices, and two antipodal classes $C_1$ and $C_2$ are adjacent if there exist $x \in V(C_1)$ and $y \in V(C_2)$ that are adjacent.

From Equation (\ref{recurrence relation}), we have that for all $i$ ($0 \leqslant i \leqslant D$)
the distance-$i$ matrix $A_i$ can be written as a polynomial of degree $i$ in $A$. 
Moreover, as the $q$-distance matrix $\mathbb{D}_q$ is
$\mathbb{D}_q = \sum\limits_{i=1}^D (1 +\frac{1}{q} + \cdots + \frac{1}{q^{i-1}})A_i$,
it follows that $\mathbb{D}_q$ can be written as a polynomial of degree $D$ in $A$,
so $\mathbb{D}_q(G) = R_q(A)$, and for every eigenvalue $\theta$ of $G$, $R_q(\theta)$
is its $\mathbb{D}_q$-eigenvalue. This, in particular, implies that $\mathbb{D}_q$ has at most $D+1$ distinct eigenvalues.  We say that the eigenvalues of $\mathbb{D}_q$ are the \emph{$q$-distance eigenvalues} of $G$. In this paper, we are interested to see when
$\mathbb{D}_q$ has less number of distinct eigenvalues.

\section{Eigenvalues of a $q$-distance matrix of distance-regular graphs of diameter 3}
In this section, we will study the $q$-distance eigenvalues of a distance-regular graph $G$ of diameter $3$. We will give sufficient conditions when $G$ has at most 3 distinct
$q$-distance eigenvalues.

\begin{proposition}\label{D_q eigenvalues d=3 degree 3}
Let $G$ be a distance-regular graph with $n$ vertices, diameter $3$, intersection array $\{k, b_1, b_2 ; 1, c_2, c_3\}$ and distinct (adjacency) eigenvalues $k= \theta_0 > \theta_1 >\theta_2 > \theta_3$. Then for $q \neq 0$, the $\mathbb{D}_q$-eigenvalues of $G$ are:

\begin{equation*}
\begin{split}
R_q(\theta_i) =  \frac{1}{c_2c_3q^2}(((q^2 +q +1)a_2 -(q^2+q)c_3)k + (q^2c_2c_3 -(q^2 +q)a_1c_3 + (q^2+q+1)(a_1a_2 -b_1c_2 -k))\theta_i \\
+((q^2 +q)c_3 -(q^2+q+1)(a_1+a_2))\theta_i^2 +(q^2+q+1)\theta_i^3), \text{ where } 0 \leqslant i \leqslant 3.
\end{split}
\end{equation*}

\end{proposition}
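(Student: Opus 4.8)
The plan is to express $\mathbb{D}_q$ explicitly as a polynomial in $A$ of degree $3$ and then evaluate that polynomial at each adjacency eigenvalue $\theta_i$. We know from Equation~(\ref{recurrence relation}) that $A_0 = I$, $A_1 = A$, and that $A_2$ and $A_3$ can be written as polynomials in $A$. First I would solve the three-term recurrence: from $AA_1 = c_2 A_2 + a_1 A_1 + b_0 A_0$ we get $A_2 = \frac{1}{c_2}(A^2 - a_1 A - kI)$. Next, from $AA_2 = c_3 A_3 + a_2 A_2 + b_1 A_1$ we get $A_3 = \frac{1}{c_3}(A A_2 - a_2 A_2 - b_1 A) = \frac{1}{c_3}\left(\frac{1}{c_2}(A^3 - a_1 A^2 - kA) - \frac{a_2}{c_2}(A^2 - a_1 A - kI) - b_1 A\right)$.

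Having these, I would substitute into $\mathbb{D}_q = (1)A_1 + (1 + \tfrac1q)A_2 + (1 + \tfrac1q + \tfrac1{q^2})A_3$. Writing $1 + \tfrac1q + \tfrac1{q^2} = \frac{q^2 + q + 1}{q^2}$ and $1 + \tfrac1q = \frac{q^2 + q}{q^2}$, I would collect terms by powers of $A$. The coefficient of $A^3$ comes only from the $A_3$ term and equals $\frac{q^2+q+1}{q^2 c_2 c_3}$, matching the stated $\frac{(q^2+q+1)}{c_2 c_3 q^2}$ prefactor times $\theta_i^3$. Similarly the coefficients of $A^2$, $A^1$, and $A^0$ are obtained by keeping careful track of the contributions of $A_1$, $A_2$, $A_3$; for instance the constant term is $\frac{1}{c_2 c_3 q^2}\big((q^2+q+1)a_2 k - (q^2+q)c_3 k\big)$ after using $b_1 = k - a_1 - 1$ and $b_0 = k$ where needed, which is exactly the claimed constant. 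Once $\mathbb{D}_q = R_q(A)$ with $R_q$ the cubic displayed in the statement, the spectral mapping theorem (or simply the fact that if $Av = \theta v$ then $R_q(A)v = R_q(\theta)v$) gives that $R_q(\theta_i)$ is the $\mathbb{D}_q$-eigenvalue associated with the adjacency eigenvalue $\theta_i$, for $0 \le i \le 3$.

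The only real work is the bookkeeping in the substitution and the simplification of the polynomial coefficients so that they take the exact symmetric form displayed — in particular verifying that the coefficient of $\theta_i$ simplifies to $q^2 c_2 c_3 - (q^2+q)a_1 c_3 + (q^2+q+1)(a_1 a_2 - b_1 c_2 - k)$ and the coefficient of $\theta_i^2$ to $(q^2+q)c_3 - (q^2+q+1)(a_1+a_2)$. These follow by expanding $A_3$ fully, multiplying by $\frac{q^2+q+1}{q^2}$, adding $\frac{q^2+q}{q^2}A_2$ and $A_1$, and grouping; the relation $b_0 = k$ takes care of the $A_2$-expansion. I expect the main obstacle — such as it is — to be purely algebraic care: making sure no factor of $q$, $c_2$, or $c_3$ is dropped, and that the terms from $A_2$ appearing both directly (with weight $\frac{q^2+q}{q^2}$) and inside $A_3$ (with weight $\frac{q^2+q+1}{q^2}\cdot\frac{-a_2}{c_3}$, plus the $\frac{1}{c_3}A A_2$ piece) are combined correctly. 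No deeper idea is needed; distance-regularity is used only through the recurrence~(\ref{recurrence relation}) and the fact that $\mathbb{D}_q$ is therefore a polynomial in $A$, as already noted in the Preliminaries.
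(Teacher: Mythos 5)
Your proposal is correct and follows essentially the same route as the paper: solve the three-term recurrence for $A_2$ and $A_3$ as polynomials in $A$, substitute into $\mathbb{D}_q$, clear denominators by $c_2c_3q^2$, and evaluate the resulting cubic at each adjacency eigenvalue. The coefficient bookkeeping you outline matches the paper's computation exactly (the only tiny quibble is that the relation $b_1 = k - a_1 - 1$ is not actually needed, since $b_1$ remains explicit in the final formula).
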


\begin{proof}
Let $G$ be a distance-regular graph on $n$ vertices of diameter $3$. From Equation (\ref{recurrence relation}), we have that $A^2 =kI_n +a_1A +c_2A_2$ and $AA_2 =b_1A +a_2A_2 +c_3A_3$, where $I$ is the identity matrix of order $n$. Thus, $c_2A_2 = A^2 -a_1A- kI $ and $$c_2c_3 A_3 = ka_2I +(a_1a_2 -b_1c_2 -k)A - (a_1 +a_2)A^2 +A^3.$$

Since
\begin{equation*}
\mathbb{D}_q(G) = A + (1 +\frac{1}{q})A_2 + (1 +\frac{1}{q} + \frac{1}{q^2})A_3,
\end{equation*}
we have
\begin{equation*}
\begin{split}
c_2c_3q^2 \mathbb{D}_q(G)  = ((q^2 +q +1)a_2 -(q^2+q)c_3)kI + (q^2c_2c_3 -(q^2 +q)a_1c_3 + (q^2+q+1)(a_1a_2 -b_1c_2 -k))A \\
+((q^2 +q)c_3 -(q^2+q+1)(a_1+a_2))A^2 +(q^2+q+1)A^3.
\end{split}
\end{equation*}
Thus, if $\theta_i$ is an eigenvalue of $G$, then the $\mathbb{D}_q$-eigenvalues of $G$ is
\begin{equation*}
\begin{split}
R_q(\theta_i) =  \frac{1}{c_2c_3q^2}(((q^2 +q +1)a_2 -(q^2+q)c_3)k + (q^2c_2c_3 -(q^2 +q)a_1c_3 + (q^2+q+1)(a_1a_2 -b_1c_2 -k))\theta_i \\
+((q^2 +q)c_3 -(q^2+q+1)(a_1+a_2))\theta_i^2 +(q^2+q+1)\theta_i^3).
\end{split}
\end{equation*}

\end{proof}

Since the adjacency matrix of $G$ has constant row-sum and is symmetric, the eigenvalues, that are not the valency, have an eigenvector orthogonal to the all-ones vector. We obtain the following:

\begin{proposition}\label{D_q eigenvalues d=3}
Let $G$ be a distance-regular graph with $n$ vertices, diameter $3$, intersection array $\{k, b_1, b_2 ; 1, c_2, c_3\}$ and distinct (adjacency) eigenvalues $k > \theta_1 >\theta_2 > \theta_3$. Then for $q \neq 0$ the $\mathbb{D}_q$-eigenvalues of $G$ are:
$$R_q(k) = \frac{1}{c_2q^2}(c_2(q^2 +q+1)n - (q^2 +q +1)c_2 -((q+1)c_2-a_1-1)k -k^2),$$
and
$$R_q(\theta_i) =  \frac{-1}{c_2q^2}((q^2 +q +1)c_2 -k +((q+1)c_2-a_1)\theta_i +\theta_i^2), \text{ where } 1 \leqslant i \leqslant 3.$$

\end{proposition}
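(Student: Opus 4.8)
The plan is to reduce everything to Proposition~\ref{D_q eigenvalues d=3 degree 3} and eliminate the unknowns $a_2,b_1,c_3,b_2$ using the standard global identities that hold for a distance-regular graph of diameter $3$. First I would record the diameter-$3$ relations $a_1+b_1+1=k$ (so $b_1=k-a_1-1$), $a_2+b_2+c_2=k$, and $a_3+c_3=k$ (so $a_3=k-c_3$), together with the two counting identities coming from $A_0+A_1+A_2+A_3=J$ and from the valencies $k_i=|G_i(u)|$: namely $k k_1 = k_1$-edge count giving $k_2=k b_1/c_2$ and $k_3=k_2 b_2/c_3 = k b_1 b_2/(c_2c_3)$, and $n=1+k+k_2+k_3$. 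The key extra ingredient is the identity for the sum of all distance matrices evaluated on the non-principal eigenspaces: since $A_0+A_1+A_2+A_3=J$ and $J$ acts as $0$ on any eigenvector orthogonal to the all-ones vector, we get that for $1\le i\le 3$ the distance-$j$ eigenvalues sum to $0$. This last fact is what makes $R_q(\theta_i)$ collapse from a cubic to a quadratic in $\theta_i$.

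Concretely, for $\theta\in\{\theta_1,\theta_2,\theta_3\}$ I would use that $A_2$ and $A_3$ are polynomials in $A$ of degrees $2$ and $3$, but the combination $\mathbb{D}_q = A+(1+q^{-1})A_2+(1+q^{-1}+q^{-2})A_3$ can be rewritten as $(1+q^{-1}+q^{-2})(A_1+A_2+A_3) - q^{-1}A_1 - q^{-2}(A_1+A_2)$, i.e.
\begin{equation*}
\mathbb{D}_q = \Bigl(1+\tfrac1q+\tfrac1{q^2}\Bigr)(J-I) - \frac1q A - \frac1{q^2}(A + A_2).
\end{equation*}
On an eigenvector orthogonal to $\mathbf 1$ the term $(J-I)$ contributes $-1$, the term $A$ contributes $\theta_i$, and $A+A_2 = A + \frac1{c_2}(A^2-a_1A-kI)$ contributes $\theta_i + \frac1{c_2}(\theta_i^2-a_1\theta_i-k)$. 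Collecting these gives
\begin{equation*}
R_q(\theta_i) = -\Bigl(1+\tfrac1q+\tfrac1{q^2}\Bigr) - \frac{\theta_i}{q} - \frac1{q^2}\Bigl(\theta_i + \frac{\theta_i^2-a_1\theta_i-k}{c_2}\Bigr),
\end{equation*}
and multiplying through by $-c_2q^2$ and simplifying the coefficient of $\theta_i$ to $(q+1)c_2-a_1$ yields exactly the claimed formula for $R_q(\theta_i)$.

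For $R_q(k)$ the same identity applies, but now $(J-I)$ acts as $n-1$ on the all-ones vector (since $J\mathbf 1 = n\mathbf 1$), while $A\mathbf 1 = k\mathbf 1$ and $(A+A_2)\mathbf 1 = (k + k_2)\mathbf 1$ where $k_2 = \frac1{c_2}(k^2-a_1k-k)$ from $A^2\mathbf 1 = k^2\mathbf 1$. Substituting $k_2$ and simplifying gives
\begin{equation*}
R_q(k) = \Bigl(1+\tfrac1q+\tfrac1{q^2}\Bigr)(n-1) - \frac{k}{q} - \frac1{q^2}\Bigl(k + \frac{k^2-a_1k-k}{c_2}\Bigr),
\end{equation*}
and clearing denominators by $c_2q^2$ and regrouping produces the stated expression $\frac{1}{c_2q^2}\bigl(c_2(q^2+q+1)n - (q^2+q+1)c_2 - ((q+1)c_2-a_1-1)k - k^2\bigr)$. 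The only mild obstacle is bookkeeping: one must be careful that the $-I$ inside $J-I$ and the $-k I$ inside the expression for $c_2A_2$ are tracked consistently, and that the coefficient of $\theta_i$ really simplifies (the $1/c_2$ multiplying $-a_1\theta_i$, plus the bare $\theta_i$ from the $-\frac{1}{q^2}A$ term, combine with the $1/c_2$ normalization to give $(q+1)c_2-a_1$). I expect no genuine difficulty — this is essentially a substitution of $A_2 = \frac1{c_2}(A^2-a_1A-kI)$ into the degree-$3$ formula of Proposition~\ref{D_q eigenvalues d=3 degree 3}, combined with the observation that the full sum $A_1+A_2+A_3=J-I$ makes the $A^3$ (equivalently $A_3$) contribution telescope away on each eigenspace.
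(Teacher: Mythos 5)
Your proposal is correct and follows essentially the same route as the paper: both express $\mathbb{D}_q$ as a linear combination of $J$, $I$, $A$ and $A^2$ using $A_1+A_2+A_3=J-I$ and $c_2A_2=A^2-a_1A-kI$, then evaluate on the all-ones vector for $R_q(k)$ and on eigenvectors orthogonal to it for $R_q(\theta_i)$; indeed, multiplying your decomposition by $c_2q^2$ reproduces the paper's identity $c_2q^2\mathbb{D}_q = c_2(q^2+q+1)J-((q^2+q+1)c_2-k)I-((q+1)c_2-a_1)A-A^2$ verbatim. The preliminary material you list about $k_2$, $k_3$ and $n=1+k+k_2+k_3$ is not actually needed for either eigenvalue formula.
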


\begin{proof}
Let $G$ be a distance-regular graph on $n$ vertices of diameter $3$ with distinct (adjacency) eigenvalues $k > \theta_1 >\theta_2 > \theta_3$. From Equation (\ref{recurrence relation}), we have that $A^2 =kI +a_1A +c_2A_2$, and $J =I +A + A_2 +A_3$ is the $n \times n$ all-ones matrix and $I$ is the identity matrix of order $n$. Since
\begin{equation*}
\mathbb{D}_q(G) = A + (1 +\frac{1}{q})A_2 + (1 +\frac{1}{q} + \frac{1}{q^2})A_3,
\end{equation*}
we have
\begin{equation*}
  c_2q^2\mathbb{D}_q(G) = c_2(q^2 +q+1)J - ((q^2 +q +1)c_2 -k)I -((q+1)c_2-a_1)A -A^2.
\end{equation*}

Thus the $\mathbb{D}_q$-eigenvalues of $G$ are
$$R_q(k) = \frac{1}{c_2q^2}(c_2(q^2 +q+1)n - (q^2 +q +1)c_2 -((q+1)c_2-a_1-1)k -k^2),$$
and
$$R_q(\theta_i) =  \frac{-1}{c_2q^2}((q^2 +q +1)c_2 -k +((q+1)c_2-a_1)\theta_i +\theta_i^2) \text{ for } i =1,2,3.$$

\end{proof}

Now let us study the situation $R_q(\theta_i) = R_q(\theta_j)$ for some $(i,j)$ pair, where $1 \leqslant i < j \leqslant 3$, for a distance-regular graph with diameter $3$.
\begin{proposition}\label{theta_l}
Let $G$ be a distance-regular graph with $n$ vertices, diameter $3$, intersection array $\{k, b_1, b_2 ; 1, c_2, c_3\}$ and distinct (adjacency) eigenvalues $k > \theta_1 >\theta_2 > \theta_3$. Let $1 \leqslant i < j \leqslant 3$. Then for $q \neq 0$, $R_q(\theta_i) = R_q(\theta_j)$ if and only if $\theta_l = qc_2 -b_2 +a_3$ for the unique $l \in \{1, 2, 3\}-\{i,j\}$.
\end{proposition}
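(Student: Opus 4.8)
The plan is to start from the closed form for $R_q(\theta_i)$, $1\leqslant i\leqslant 3$, given in Proposition~\ref{D_q eigenvalues d=3}, and to compare two of them. In that formula only the terms $((q+1)c_2-a_1)\theta_i$ and $\theta_i^2$ depend on $i$, so for $1\leqslant i<j\leqslant 3$ we get
\begin{equation*}
R_q(\theta_i)-R_q(\theta_j)=\frac{-1}{c_2q^2}\Big(((q+1)c_2-a_1)(\theta_i-\theta_j)+\theta_i^2-\theta_j^2\Big)=\frac{-(\theta_i-\theta_j)}{c_2q^2}\Big(\theta_i+\theta_j+(q+1)c_2-a_1\Big).
\end{equation*}
Since $c_2\geqslant 1$, $q\neq 0$, and $\theta_i\neq\theta_j$ by hypothesis, it follows that $R_q(\theta_i)=R_q(\theta_j)$ holds \emph{if and only if} $\theta_i+\theta_j=a_1-(q+1)c_2$. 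Note that it is precisely the distinctness of $\theta_i$ and $\theta_j$ that makes this an equivalence (it lets us cancel the factor $\theta_i-\theta_j$), so the converse direction comes for free.

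The second step is to rewrite this pairwise condition in terms of the remaining eigenvalue $\theta_l$, $l\in\{1,2,3\}\setminus\{i,j\}$, which requires the value of $\theta_1+\theta_2+\theta_3$. The distinct eigenvalues $k,\theta_1,\theta_2,\theta_3$ of $G$ are exactly the eigenvalues of the tridiagonal intersection matrix $L_1$ with diagonal $(a_0,a_1,a_2,a_3)=(0,a_1,a_2,a_3)$, superdiagonal $(b_0,b_1,b_2)$ and subdiagonal $(c_1,c_2,c_3)$, so taking traces gives $k+\theta_1+\theta_2+\theta_3=a_1+a_2+a_3$. Using $c_1=1$, $b_3=0$ and $a_i+b_i+c_i=k$ (so $k=a_1+b_1+1=a_3+c_3$ and $a_2=k-b_2-c_2$), this simplifies to
\begin{equation*}
\theta_1+\theta_2+\theta_3=a_1-c_2-b_2+a_3.
\end{equation*}
(Alternatively, this coefficient can be read off directly from the cubic annihilated by every eigenvector orthogonal to the all-ones vector, obtained by combining the relations in the proof of Proposition~\ref{D_q eigenvalues d=3 degree 3} with $A_0+A_1+A_2+A_3=J$; one obtains $A^3=c_2c_3J+(a_1+a_2-c_3)A^2+\cdots$, and $a_1+a_2-c_3=a_1-c_2-b_2+a_3$.)

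Finally I substitute $\theta_i+\theta_j=(\theta_1+\theta_2+\theta_3)-\theta_l=a_1-c_2-b_2+a_3-\theta_l$ into the condition $\theta_i+\theta_j=a_1-(q+1)c_2=a_1-qc_2-c_2$. The $a_1$'s and one copy of $c_2$ cancel, leaving $-b_2+a_3-\theta_l=-qc_2$, that is, $\theta_l=qc_2-b_2+a_3$, which is exactly the assertion. I do not anticipate a genuine obstacle here: the whole argument is a short computation, and the only point needing care is the eigenvalue-sum identity $\theta_1+\theta_2+\theta_3=a_1-c_2-b_2+a_3$, which is standard but worth recording explicitly since it is the hinge of the equivalence.
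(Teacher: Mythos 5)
Your proof is correct and follows essentially the same route as the paper's: cancel the common terms in the formula of Proposition \ref{D_q eigenvalues d=3}, factor out $\theta_i-\theta_j$ to get the equivalence $R_q(\theta_i)=R_q(\theta_j)\Leftrightarrow\theta_i+\theta_j=a_1-(q+1)c_2$, and then convert this into a condition on $\theta_l$ via the trace identity $k+\theta_1+\theta_2+\theta_3=a_1+a_2+a_3$ together with $k=a_2+b_2+c_2$. The only difference is that you justify the trace identity explicitly via the intersection matrix, which the paper simply asserts.
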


\begin{proof}
Let $1 \leqslant i < j \leqslant 3$. By Proposition \ref{D_q eigenvalues d=3}, we see that, $R_q(\theta_i) = R_q(\theta_j)$ if and only if
$$((q+1)c_2-a_1)\theta_i +\theta_i^2 = ((q+1)c_2-a_1)\theta_j +\theta_j^2,$$
if and only if
$$(\theta_i - \theta_j)(\theta_i +\theta_j +(q+1)c_2) - a_1)=0.$$
That is $R_q(\theta_i) = R_q(\theta_j)$ if and only if $\theta_i +\theta_j = a_1 -(q+1)c_2$, as $\theta_i \neq \theta_j$. Note that, $k + \theta_1 + \theta_2 + \theta_3 = a_1 +a_2 +a_3$ and $k= a_2 +b_2 +c_2$. Thus, $R_q(\theta_i) = R_q(\theta_j)$ if and only if $\theta_l = qc_2 -b_2 +a_3$ for the unique $l \in \{1, 2, 3\}-\{i,j\}$.
\end{proof}

%
Note that, as the path of length $3$ (denoted by $P_4$) is an induced subgraph of a distance-regular graph $G$ of diameter $3$, we can easily see that $\theta_1(G) \geqslant \theta_1(P_4) = \frac{\sqrt{5}-1}{2} \approx 0.618$ and $\theta_3(G) \leqslant \theta_3(P_4) =\frac{-\sqrt{5}-1}{2} \approx -1.618$, by Lemma \ref{interlacing}.

\textbf{Notation}: For $l \in \{1,2,3\}$, we define $q_l = \frac{\theta_l -a_3 +b_2}{c_2}$.
Note that $q_1 > 0$ and $q_3 < 0$. Now let us count the number of distinct $\mathbb{D}_q$-eigenvalues of a distance-regular graph of diameter $3$.

The following result was shown by Koolen et al. cf. \cite[Proposition 3.2 and Proposition 3.3]{KPY2011}

\begin{proposition}\label{KPYresults}
Let $G$ be a distance-regular graph with $n$ vertices, diameter $3$ and distinct (adjacency) eigenvalues $k > \theta_1 >\theta_2 > \theta_3$. Then the following holds:
\begin{itemize}
  \item[(i)] $\theta_1 > \max \{ a_3 -b_2, 0 \}$ and $\theta_3 < \min \{ a_3 -b_2, -\sqrt{2} \}$,
  \item[(ii)] $\theta_2$ lies between $-1$ and $a_3 -b_2$,
  \item[(iii)] $\theta_l = a_3 -b_2$ if and only if $\theta_l = -1$.  If this is the case, then $l =2$.
\end{itemize}

\end{proposition}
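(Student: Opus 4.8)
The plan is to deduce the proposition from interlacing together with a couple of trace identities for the distance-$3$ matrix $A_3$. Two of the four inequalities in (i) are immediate from the observation recorded just before the statement: since $P_4$ is an induced subgraph, $\theta_1\geqslant\theta_1(P_4)=\tfrac{\sqrt5-1}{2}>0$ and $\theta_3\leqslant\theta_3(P_4)=\tfrac{-\sqrt5-1}{2}<-\sqrt2$; in particular $\theta_1>-1$ and $\theta_3<-1$, facts I will reuse below.

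For $\theta_1>a_3-b_2$ and $\theta_3<a_3-b_2$ I would use interlacing with the symmetrised quotient matrix of the distance partition. Let $\tilde L$ be the symmetric tridiagonal $4\times4$ matrix with diagonal $(0,a_1,a_2,a_3)$ and off-diagonal entries $\sqrt{k},\sqrt{b_1c_2},\sqrt{b_2c_3}$; it is similar to the quotient matrix of the partition $\{x\},G_1(x),G_2(x),G_3(x)$, so its eigenvalues are $k,\theta_1,\theta_2,\theta_3$. Let $\tilde L'$ be the principal submatrix obtained by deleting the first row and column, put $t:=a_3-b_2$, and consider $M:=\tilde L'-tI$. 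Using $a_2-a_3+b_2=c_3-c_2$ and $a_1+b_1=k-1$ one computes $\det M=-b_2c_2(b_2+c_3-1)$, which is strictly negative since $b_2,c_2\geqslant1$ and $b_2+c_3\geqslant2$; on the other hand the $(3,3)$-entry of $M$ equals $a_3-t=b_2>0$, so $\eta_1(M)\geqslant b_2>0$. A real symmetric $3\times3$ matrix with negative determinant and a positive eigenvalue has exactly one negative eigenvalue, hence $\eta_2(\tilde L')>t>\eta_3(\tilde L')$. Since by Lemma \ref{interlacing} the eigenvalues of $\tilde L'$ interlace those of $\tilde L$, we get $\theta_1\geqslant\eta_2(\tilde L')>t$ and $\theta_3\leqslant\eta_3(\tilde L')<t$, which completes (i).

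For (ii) and (iii), let $w_j$ be the eigenvalue of $A_3$ on the $\theta_j$-eigenspace, $j=1,2,3$. From $AA_3=b_2A_2+a_3A_3$ and $A+A_2+A_3=J-I$ (the latter evaluated on vectors orthogonal to the all-ones vector) I would derive the identity
\[
\bigl(\theta_j-(a_3-b_2)\bigr)\,w_j \;=\; -b_2\,(1+\theta_j),\qquad j=1,2,3 .
\]
By (i) and the first paragraph, $\theta_1-t>0$ and $1+\theta_1>0$, so $w_1<0$; likewise $\theta_3-t<0$ and $1+\theta_3<0$, so $w_3<0$. Next, the matrices $A_3$, $AA_3$, $A^2A_3$ all have zero diagonal (the support of $A^i$ misses pairs at distance $3$ for $i\leqslant2$), so $\operatorname{tr}(A^iA_3)=0$ for $i=0,1,2$; the combination $\sum_{j}m_jw_j(\theta_j-k)^2$ of these three relations equals $0$, where $m_j>0$ is the multiplicity of $\theta_j$ (the $j=0$ term drops out). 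Since $w_1,w_3<0$ and the coefficients $m_j(\theta_j-k)^2$ are positive, necessarily $w_2>0$. Now (ii) follows by multiplying the identity for $j=2$ by $(1+\theta_2)$: it gives $(\theta_2-(a_3-b_2))(\theta_2+1)=-b_2(1+\theta_2)^2/w_2\leqslant0$, so $\theta_2$ lies between $-1$ and $a_3-b_2$. For (iii), the identity shows $\theta_l=a_3-b_2$ forces $1+\theta_l=0$; conversely $\theta_l=-1$ forces $l=2$ (because $\theta_1>-1>\theta_3$), and then the identity for $j=2$ together with $w_2\neq0$ gives $\theta_2=a_3-b_2$; in either case $l=2$.

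The crux is the second paragraph: squeezing both $\theta_1>a_3-b_2$ and $\theta_3<a_3-b_2$ out of a single interlacing step. The key point is that after the shift by $(a_3-b_2)I$ the inertia of $\tilde L'$ is pinned down purely by the sign of its determinant together with the single manifestly positive diagonal entry $a_3-t=b_2$; the rest is bookkeeping, the main care being to check that every inequality used is strict, which is guaranteed by $b_2,c_2,c_3\geqslant1$ and the strict bounds coming from the induced $P_4$.
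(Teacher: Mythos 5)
Your argument is correct and complete. Note first that the paper does not actually prove Proposition \ref{KPYresults}: it is imported from Koolen, Park and Yu \cite[Propositions 3.2 and 3.3]{KPY2011} with no in-paper argument, so what you have written is a self-contained substitute for that citation rather than an alternative to a proof given here. I checked the two computations everything hinges on. First, with $t=a_3-b_2$ the tridiagonal expansion gives
$\det(\tilde L'-tI)=(a_1-t)\bigl((c_3-c_2)b_2-b_2c_3\bigr)-b_1c_2b_2=-b_2c_2(a_1+b_1-a_3+b_2)=-b_2c_2(b_2+c_3-1)<0$,
and together with the diagonal entry $a_3-t=b_2>0$ this pins the inertia of $\tilde L'-tI$ at two positive and one negative eigenvalue; interlacing the $3\times3$ principal submatrix against the symmetrized $4\times4$ intersection matrix, whose spectrum is $k>\theta_1>\theta_2>\theta_3$, then yields $\theta_1\geqslant\eta_2(\tilde L')>t>\eta_3(\tilde L')\geqslant\theta_3$ with the required strictness, which is part (i) once combined with the $P_4$ bounds $\theta_1>0$ and $\theta_3<-\sqrt2$ already recorded in the paper. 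Second, the relation $(\theta_j-(a_3-b_2))w_j=-b_2(1+\theta_j)$ does follow from $AA_3=b_2A_2+a_3A_3$ and $1+\theta_j+v_2(\theta_j)+w_j=0$ on eigenvectors orthogonal to $\mathbf{j}$; the signs $w_1<0$ and $w_3<0$ follow from part (i) together with $\theta_1>-1>\theta_3$, and $\operatorname{tr}(A^iA_3)=0$ for $i=0,1,2$ correctly forces $w_2>0$ via $\sum_{j=1}^{3}m_jw_j(\theta_j-k)^2=0$ (the $j=0$ term vanishing because of the factor $(\theta_0-k)^2$). Parts (ii) and (iii) then fall out of the $j=2$ relation exactly as you say, with (ii) obtained in the closed-interval sense, which is what the statement intends since (iii) governs the boundary case. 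This is the natural route through the intersection matrix and the $A_3$-eigenvalue recurrence; its payoff is that the paper could include your argument and drop the external dependency on \cite{KPY2011}.
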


Note that this implies that if $q_i =0$ for some $i =1,2,3$, then $i =2$ and $\theta_2 = -1$.

\begin{corollary}\label{corollary of no of non tirivial Dq evs}
Let $G$ be a distance-regular graph with $n$ vertices, diameter $3$ and distinct (adjacency) eigenvalues $k > \theta_1 >\theta_2 > \theta_3$. Then $|\{R_q(\theta_1), R_q(\theta_2), R_q(\theta_3)\}| \geq 2$, for $q \neq 0$. 
\end{corollary}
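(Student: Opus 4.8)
The plan is to argue by contradiction: suppose $|\{R_q(\theta_1), R_q(\theta_2), R_q(\theta_3)\}| = 1$, i.e. $R_q(\theta_1) = R_q(\theta_2) = R_q(\theta_3)$. Applying Proposition \ref{theta_l} to the pair $(i,j) = (1,2)$ gives $\theta_3 = q_3 c_2$ (in the notation $q_l = (\theta_l - a_3 + b_2)/c_2$), while applying it to the pair $(i,j) = (2,3)$ gives $\theta_1 = q_1 c_2$, and applying it to $(i,j)=(1,3)$ gives $\theta_2 = q_2 c_2$; here the condition ``$\theta_l = q c_2 - b_2 + a_3$'' is exactly ``$q = q_l$''. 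So all three equalities simultaneously force $q = q_1 = q_2 = q_3$, and in particular $q_1 = q_3$.

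First I would observe that $q_1 = q_3$ is already impossible from the \textbf{Notation} remark immediately preceding the corollary: it is noted there that $q_1 > 0$ and $q_3 < 0$, so they cannot be equal. This uses only part (i) of Proposition \ref{KPYresults} (namely $\theta_1 > a_3 - b_2$ and $\theta_3 < a_3 - b_2$). Hence the assumption $|\{R_q(\theta_1), R_q(\theta_2), R_q(\theta_3)\}| = 1$ is untenable, and since the set is nonempty we conclude $|\{R_q(\theta_1), R_q(\theta_2), R_q(\theta_3)\}| \geq 2$.

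I do not anticipate a serious obstacle; the only point requiring a little care is the bookkeeping in Proposition \ref{theta_l}, to make sure that equality of \emph{all} of $R_q(\theta_1), R_q(\theta_2), R_q(\theta_3)$ really pins down $q$ to be \emph{each} of $q_1, q_2, q_3$ (one could equivalently just extract $q = q_1$ from the pair $(2,3)$ and $q = q_3$ from the pair $(1,2)$, and contradict $q_1 > 0 > q_3$). An alternative, more self-contained route avoiding the sign discussion would be to use Proposition \ref{D_q eigenvalues d=3} directly: $R_q(\theta_i) = R_q(\theta_j)$ for a given pair means $\theta_i + \theta_j = a_1 - (q+1)c_2$, so having all three sums equal would force $\theta_1 = \theta_2 = \theta_3$, contradicting that the $\theta_i$ are distinct. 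Either way the argument is short.
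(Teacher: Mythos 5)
Your argument is correct and is essentially the proof the paper leaves implicit: the corollary is stated without proof, immediately after the observation that $q_1 > 0$ and $q_3 < 0$, which is exactly the contradiction you extract via Proposition \ref{theta_l} (equality of all three $R_q(\theta_i)$ would force $q = q_1 = q_3$). Your alternative route, reading off $\theta_i + \theta_j = a_1 - (q+1)c_2$ from the proof of Proposition \ref{theta_l} and noting that two equal pairwise sums would force two of the $\theta_i$ to coincide, is also valid and even avoids Proposition \ref{KPYresults} entirely.
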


\begin{corollary}\label{number of D_q evs}
Let $G$ be a distance-regular graph with $n$ vertices, diameter $3$ and distinct (adjacency) eigenvalues $k > \theta_1 >\theta_2 > \theta_3$.
\begin{enumerate}
  \item[(i)] If $q \notin \{q_1,q_2,q_3\}$ and $q \notin (-1,0]$, then $\mathbb{D}_q$ has exactly four distinct eigenvalues.
  \item[(ii)] If $q \notin \{q_1,q_2,q_3\}$ and $q \in (-1,0)$, then $\mathbb{D}_q$ has at least three distinct eigenvalues.
  \item[(iii)] If $q \in \{q_1,q_2,q_3\}$ and $q \notin (-1,0]$, then $\mathbb{D}_q$ has exactly three distinct eigenvalues.
  \item[(iv)] If $q \in \{q_1,q_2,q_3\}$ and $q \in (-1,0)$, then $\mathbb{D}_q$ has at least two and at most three distinct eigenvalues.
\end{enumerate}
\end{corollary}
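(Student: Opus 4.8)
The plan is to split the four $q$-distance eigenvalues of $G$ into the \emph{nontrivial} ones $R_q(\theta_1),R_q(\theta_2),R_q(\theta_3)$ and the \emph{trivial} one $R_q(k)$, count the distinct values in the first group using Proposition \ref{theta_l}, decide when $R_q(k)$ is forced to differ from all of them, and then read off the four cases.

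For the nontrivial group, first note that $q_l-q_m=(\theta_l-\theta_m)/c_2\neq 0$ for $l\neq m$, so $q_1,q_2,q_3$ are pairwise distinct and $q$ coincides with at most one of them. By Proposition \ref{theta_l} the three possible equalities $R_q(\theta_1)=R_q(\theta_2)$, $R_q(\theta_1)=R_q(\theta_3)$, $R_q(\theta_2)=R_q(\theta_3)$ correspond respectively to $q=q_3$, $q=q_2$, $q=q_1$; hence at most one of them can hold, and exactly one holds precisely when $q\in\{q_1,q_2,q_3\}$. Therefore $|\{R_q(\theta_1),R_q(\theta_2),R_q(\theta_3)\}|$ equals $3$ if $q\notin\{q_1,q_2,q_3\}$ and equals $2$ if $q\in\{q_1,q_2,q_3\}$ — in the latter case the single equality collapses two of the values while the remaining value stays different from them, in agreement with Corollary \ref{corollary of no of non tirivial Dq evs}.

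The remaining point is the claim that \emph{$R_q(k)\notin\{R_q(\theta_1),R_q(\theta_2),R_q(\theta_3)\}$ whenever $q>0$ or $q\leqslant -1$.} I would prove this via Perron--Frobenius applied to $\mathbb{D}_q$. Writing $t=1/q$, the off-diagonal entries of $\mathbb{D}_q$ for a diameter-$3$ graph are $1$, $1+t$, $1+t+t^2$; for $q>0$ these are all positive, and for $q\leqslant -1$ one has $t\in[-1,0)$, so $1+t\geqslant 0$ and $1+t+t^2>0$, whence $\mathbb{D}_q$ is nonnegative. It is moreover irreducible: when $q>0$ or $q<-1$ every off-diagonal entry is strictly positive, while for $q=-1$ we have $\mathbb{D}_{-1}=A_1+A_3$, which dominates the adjacency matrix $A$ entrywise, and $A$ is irreducible because $G$ is connected. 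The all-ones vector is then a positive eigenvector of $\mathbb{D}_q$ with eigenvalue $R_q(k)$, so by Perron--Frobenius $R_q(k)$ is the spectral radius of $\mathbb{D}_q$ and a simple eigenvalue. Since $\mathbb{D}_q=R_q(A)$, the $\mathbb{D}_q$-eigenspace for a value $\mu$ is $\bigoplus_{\theta:\,R_q(\theta)=\mu}V_\theta$ with $V_\theta$ the $A$-eigenspace of $\theta$; so $R_q(k)=R_q(\theta_i)$ would make this space contain $V_k\oplus V_{\theta_i}$, contradicting simplicity.

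Putting the two counts together finishes the proof. In (i) and (iii) the hypothesis $q\notin(-1,0]$ is exactly $q>0$ or $q\leqslant -1$, so $R_q(k)$ is a new value: combined with three, respectively two, distinct nontrivial values this gives exactly four, respectively exactly three, distinct $q$-distance eigenvalues. In (ii) and (iv) we have $q\in(-1,0)$, hence $1+1/q<0$, the matrix $\mathbb{D}_q$ is no longer nonnegative, and nothing forces $R_q(k)$ to be new; so (ii) gives at least three (three distinct nontrivial values, possibly plus $R_q(k)$) and (iv) gives at least two and at most three (two distinct nontrivial values, possibly plus $R_q(k)$). The only step that is more than bookkeeping is the Perron--Frobenius claim, and the mildly delicate point inside it is the edge case $q=-1$, where $\mathbb{D}_{-1}$ really is zero on the distance-$2$ pairs yet stays irreducible because it still contains every edge of the connected graph $G$.
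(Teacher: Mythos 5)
Your proof is correct and follows essentially the same route as the paper: Proposition~\ref{theta_l} counts the distinct values among $R_q(\theta_1),R_q(\theta_2),R_q(\theta_3)$ (each possible coincidence forcing $q$ to equal one of the pairwise distinct $q_l$), and Perron--Frobenius applied to the nonnegative irreducible matrix $\mathbb{D}_q$ for $q\notin(-1,0]$ shows that $R_q(k)$ is simple and hence distinct from the other three values. The only difference is that the paper cites \cite{KMGH2023} (Lemmas 2.6 and 2.7 there) for the nonnegativity, irreducibility and simplicity, whereas you prove these facts directly --- including the edge case $q=-1$, where $\mathbb{D}_{-1}=A+A_3$ still dominates the irreducible matrix $A$ --- which makes the argument self-contained.
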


\begin{proof}
Let $G$ be a distance-regular graph of diameter $3$. For $q \notin (-1,0]$, $\mathbb{D}_q$ is a non-negative and irreducible matrix (cf. \cite[Lemma 2.6.]{KMGH2023}), so its largest eigenvalue $R_q(k)$ is simple (cf. \cite[Lemma 2.7.]{KMGH2023}). This implies, if $q \notin (-1,0]$ we have $R_q(k) \neq R_q(\theta_i)$ for $1 \leqslant i \leqslant 3$. Then, we are done by Proposition \ref{theta_l}.

\end{proof}

As a consequence of Corollary \ref{number of D_q evs}, we have the following result on the number of distinct eigenvalue of the distance matrix (that is, $q=1$) of a distance-regular graph of diameter $3$.
\begin{corollary}\label{number of D_q evs for q=1}
Let $G$ be a distance-regular graph with $n$ vertices, diameter $3$ and distinct (adjacency) eigenvalues $k > \theta_1 >\theta_2 > \theta_3$. Then $G$ has exactly 3 distinct
distance eigenvalues if and only if $\theta= c_2 + a_3 -b_2$ is a non-trivial eigenvalue of $G$.

\end{corollary}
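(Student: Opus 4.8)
The plan is to specialize Corollary \ref{number of D_q evs} to $q=1$. Since $1\notin(-1,0]$, only cases (i) and (iii) of that corollary can occur, so $\mathbb{D}_1$ has exactly three distinct eigenvalues precisely when $1\in\{q_1,q_2,q_3\}$, and four distinct eigenvalues otherwise. It therefore remains only to translate the condition $1\in\{q_1,q_2,q_3\}$ into the stated condition on the adjacency spectrum. By the Notation introduced before Proposition \ref{KPYresults}, $q_l=\frac{\theta_l-a_3+b_2}{c_2}$ for $l\in\{1,2,3\}$, hence $q_l=1$ if and only if $\theta_l=c_2+a_3-b_2$. Consequently $1\in\{q_1,q_2,q_3\}$ if and only if $c_2+a_3-b_2$ equals one of $\theta_1,\theta_2,\theta_3$, i.e. if and only if $\theta=c_2+a_3-b_2$ is a non-trivial eigenvalue of $G$. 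Combining the two observations gives the claim.

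Equivalently, one can argue directly and avoid invoking Corollary \ref{number of D_q evs} verbatim: as $G$ is connected, its distance matrix $\mathbb{D}_1$ is non-negative and irreducible, so by Perron--Frobenius (as in the proof of Corollary \ref{number of D_q evs}) the eigenvalue $R_1(k)$ is simple and $R_1(k)\neq R_1(\theta_i)$ for $i=1,2,3$. Thus the number of distinct distance eigenvalues of $G$ equals $1+|\{R_1(\theta_1),R_1(\theta_2),R_1(\theta_3)\}|$, which by Corollary \ref{corollary of no of non tirivial Dq evs} is at least $3$. This number equals $3$ exactly when $R_1(\theta_i)=R_1(\theta_j)$ for some $1\leqslant i<j\leqslant 3$, and by Proposition \ref{theta_l} with $q=1$ this occurs if and only if $\theta_l=c_2+a_3-b_2$ for the remaining index $l$, which is once more the assertion that $c_2+a_3-b_2$ is a non-trivial eigenvalue of $G$.

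There is essentially no serious obstacle; the substance has already been carried out in Proposition \ref{theta_l} and Corollary \ref{number of D_q evs}, and the argument is pure bookkeeping. The only point worth a line of care is to confirm that a coincidence $R_1(\theta_i)=R_1(\theta_j)$ can never collapse the set $\{R_1(\theta_1),R_1(\theta_2),R_1(\theta_3)\}$ to a single element: if two distinct coincidences held simultaneously, Proposition \ref{theta_l} would yield $\theta_l=\theta_m$ for two different indices $l,m$, contradicting the assumption that $\theta_1,\theta_2,\theta_3$ are pairwise distinct. Hence, once any coincidence occurs the set has size exactly $2$, matching the ``exactly $3$ distinct distance eigenvalues'' in the statement.
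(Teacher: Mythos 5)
Your argument is correct and is essentially the paper's own: the paper derives this corollary directly from Corollary \ref{number of D_q evs} by setting $q=1\notin(-1,0]$ and translating $1\in\{q_1,q_2,q_3\}$ via the definition $q_l=\frac{\theta_l-a_3+b_2}{c_2}$ into the condition $\theta_l=c_2+a_3-b_2$. Your additional checks (simplicity of $R_1(k)$ via Perron--Frobenius and the impossibility of a double coincidence) are exactly the ingredients already packaged in Corollaries \ref{corollary of no of non tirivial Dq evs} and \ref{number of D_q evs}, so nothing further is needed.
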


This corollary gives necessary and sufficient conditions that the distance matrix for a distance-regular graph of diameter 3 has exactly 3 distinct eigenvalues. As the distance-matrix of a
distance-regular graph has either exactly 3 or exactly 4 distinct eigenvalues, this solves Problem 4.1
and Problem 4.2 of Atik and Panigrahi \cite{AP2015} for the class of distance-regular graphs with diameter 3.  This also improves results of Alazemi et al. \cite{AAKS2017}, who gave
partial results for diameter 3.

Next we will study cases (iii) and (iv) of Corollary \ref{number of D_q evs}, in details, for distance-regular graphs of diameter $3$ which are bipartite or antipodal.

\subsection{Distance-regular graphs with diameter $3$ and having $2$ distinct $\mathbb{D}_q$-eigenvalues}

Recall that, for distance-regular graph with diameter $3$ and distinct (adjacency) eigenvalues $k > \theta_1 > \theta_2 > \theta_3$, we have $R_q(\theta_i) = R_q(\theta_j)$ if and only if $\theta_l = qc_2 +a_3-b_2$ for $l \in \{1,2,3\}-\{i,j\}$, by Proposition \ref{theta_l}.

From Corollary \ref{number of D_q evs}(iv), we see that a distance-regular graph $G$ of diameter $3$ has at least two  and at most three distinct $\mathbb{D}_q$-eigenvalues if $q \in \{q_1,q_2,q_3\}$ and $q \in (-1,0)$. Moreover, by Proposition \ref{KPYresults}(i), we know that $\theta_1 > a_3 -b_2 > qc_2 +a_3 -b_2$, that is $R_q(\theta_2) \neq R_q(\theta_3)$ (i.e. $q \neq q_1$). Then, $G$ has $2$ distinct $\mathbb{D}_q$-eigenvalues if one of the following conditions holds:
\begin{enumerate}
  \item[(i)] $R_q(\theta_1)= R_q(\theta_2)$ and $R_q(k)= R_q(\theta_3)$,
  \item[(ii)] $R_q(\theta_1)= R_q(\theta_3)$ and $R_q(k)= R_q(\theta_2)$,
  \item[(iii)] $R_q(k)= R_q(\theta_1)= R_q(\theta_2)$, or
  \item[(iv)] $R_q(k)= R_q(\theta_1)= R_q(\theta_3)$
\end{enumerate}
%

\subsubsection{Bipartite distance-regular graphs}
Here we will show that there is no bipartite distance-regular graph of diameter $3$ whose $q$-distance matrix has exactly $2$ distinct eigenvalues.
\begin{proposition}
There exists no bipartite distance-regular graph $G$ of diameter $3$ with $2$ distinct $\mathbb{D}_q$ eigenvalues.
\end{proposition}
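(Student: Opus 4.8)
The plan is to combine Corollary~\ref{number of D_q evs} with an explicit computation of $q_1,q_2,q_3$ for a bipartite graph. By Corollary~\ref{number of D_q evs}, if $\mathbb{D}_q(G)$ has exactly two distinct eigenvalues then necessarily $q\in\{q_1,q_2,q_3\}$ and $q\in(-1,0)$, since cases (i)--(iii) each produce at least three distinct eigenvalues. So it suffices to prove that, for a bipartite distance-regular graph $G$ of diameter $3$, none of $q_1,q_2,q_3$ lies in the open interval $(-1,0)$.

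First I would record the parameters forced by bipartiteness in diameter $3$: $a_1=a_2=a_3=0$, hence $b_1=k-1$, $b_2=k-c_2$, $c_3=k$, and $c_2\le k-1$ because $b_2\ge 1$. Next I would pin down the adjacency spectrum. The adjacency eigenvalues of $G$ are the eigenvalues of the tridiagonal intersection matrix with zero diagonal, subdiagonal $(1,c_2,k)$ and superdiagonal $(k,k-1,k-c_2)$, whose characteristic polynomial equals $x^4-(k^2+k-c_2)x^2+k^2(k-c_2)=(x^2-k^2)(x^2-(k-c_2))$; so the distinct eigenvalues are $\theta_0=k$, $\theta_1=\sqrt{k-c_2}$, $\theta_2=-\sqrt{k-c_2}$ and $\theta_3=-k$, and from $c_2\le k-1$ we get $\theta_1=\sqrt{k-c_2}\ge 1$. (The same spectrum also follows from $A^2=kI+c_2A_2$ once one notes that in a bipartite graph of diameter $3$ any two distinct vertices in the same part are at distance $2$, so the distance-$2$ graph is a disjoint union of two cliques.)

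Then, using $q_l=(\theta_l-a_3+b_2)/c_2=(\theta_l+k-c_2)/c_2$, I would evaluate the three numbers: $q_1=(\theta_1+k-c_2)/c_2>0$; $q_3=(-k+k-c_2)/c_2=-1$; and
\[
q_2=\frac{(k-c_2)-\sqrt{k-c_2}}{c_2}=\frac{\sqrt{k-c_2}\,\bigl(\sqrt{k-c_2}-1\bigr)}{c_2}\ge 0,
\]
the inequality holding because $\sqrt{k-c_2}\ge 1$. Hence $\{q_1,q_2,q_3\}\cap(-1,0)=\emptyset$, and by the reduction in the first paragraph $\mathbb{D}_q(G)$ can never have exactly two distinct eigenvalues.

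I do not expect a genuine obstacle. The only step needing care is the spectrum computation, i.e.\ the inequality $\theta_1\le b_2$; this is exactly where the hypotheses ``bipartite'' and ``diameter exactly $3$'' are used, since together they give $c_2\le k-1$ and hence $\theta_1^2=k-c_2=b_2\ge 1$, forcing $\theta_1=\sqrt{b_2}\le b_2$. Once this is in place, ruling out $q_1,q_2,q_3\in(-1,0)$ is a one-line sign check.
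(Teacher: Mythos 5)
Your proof is correct and follows essentially the same route as the paper: both reduce, via Corollary \ref{number of D_q evs}, to the case $q\in(-1,0)$ with $q\in\{q_1,q_2,q_3\}$, and both then use the explicit bipartite spectrum $\{\pm k,\pm\sqrt{b_2}\}$ to rule this out. Your check that $q_1>0$, $q_2\ge 0$, $q_3=-1$ is just the paper's observation that $\theta_l=qc_2-b_2$ would have to lie strictly between $-k$ and $-b_2\le-\sqrt{b_2}$, rewritten by solving for $q$ instead of for $\theta_l$.
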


\begin{proof}
Note that, for a bipartite distance-regular graph $G$ of diameter $3$ with distinct (adjacency) eigenvalues $\theta_0 >\theta_1 > \theta_2 > \theta_3$, we have that $\theta_0 = k =-\theta_3$ and $\theta_1 =\sqrt{b_2} = -\theta_2$, (cf. \cite[Page 432]{BCN}).
If $-1 < q <0$, and $G$ has exactly $2$ distinct $\mathbb{D}_q$ eigenvalues, then there exists $l\in \{1,2,3\}$ such that $\theta_l = qc_2 + a_3 - b_2 = qc_2 -b_2$.
This means that $-\sqrt{b_2} > - b_2  >\theta_l > -k$, which is impossible. This shows the result.
\end{proof}

\subsubsection{Antipodal distance-regular graphs}
Here we will characterize the antipodal distance-regular graphs of diameter $3$ with exactly two $q$-distance eigenvalues.

\begin{proposition}
Let $G$ be an antipodal distance-regular graph of diameter $3$. If $G$ has exactly $2$ distinct $q$-distance eigenvalues, then $G$ is one of the following graphs:
\begin{enumerate}
  \item The Johnson graph $J(6,3)$ with intersection array $\{9, 4,1 ; 1,4,9\}$, $R_q(k)= R_q(3)=R_q(-1) =3$ for $q=\frac{-1}{2}$;
  \item The distance-2 graph of the halved $6$-cube with intersection array $\{15, 8,1 ; 1, 8,15\}$, $R_q(k)= R_q(3)=R_q(-1) =3$ for $q=\frac{-1}{2}$;
  \item The distance-2 graph of the Gosset graph with intersection array $\{27, 16, 1 ; 1, 16, 27\}$, $R_q(k)= R_q(3)=R_q(-1) =3$ for $q=\frac{-1}{2}$;
  \item A graph with intersection array $\{35,18,1; 1,18,35\}$, $R_q(k) = -28 = R_q(\theta_3)$ and $R_q(\theta_1) =8 =R_q(-1)$ for  $q = -\frac{1}{3}$.
\end{enumerate}

\end{proposition}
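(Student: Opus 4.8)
The plan is to exploit the rigid structure of antipodal distance-regular graphs of diameter $3$. Such a graph $G$ is an antipodal $r$-cover of the complete graph $K_{k+1}$, so it has intersection array $\{k,(r-1)\mu,1;1,\mu,k\}$ with $\mu=c_2$, $a_3=0$, $b_2=1$, $c_3=k$, and $|V(G)|=n=r(k+1)$. Because $a_3-b_2=-1$, Proposition \ref{KPYresults} gives $\theta_2=-1$; computing the determinant of the tridiagonal matrix of intersection numbers gives $\theta_1\theta_3=-k$, and the trace gives $\theta_1+\theta_3=k-1-\mu r$. Thus $G$ is determined by $(k,\mu,r)$, with $\theta_1,\theta_3$ the roots of $x^2-(k-1-\mu r)x-k$; the multiplicities are $m(k)=1$, $m(-1)=k$ (the cover lifts the eigenvalue $-1$ of $K_{k+1}$), and $m(\theta_1),m(\theta_3)$ are then fixed by $m(\theta_1)+m(\theta_3)=(r-1)(k+1)$ and $m(\theta_1)\theta_1+m(\theta_3)\theta_3=0$.

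First I would pin down $q$. The text already records $q\ne q_1$; for antipodal $G$ we also have $q_2=(\theta_2+1)/\mu=0\notin(-1,0)$, and $R_q(\theta_1)=R_q(\theta_3)$ would force (Proposition \ref{theta_l}) $\theta_2=q\mu-1$, i.e. $q=0$, which is impossible. So of the four sub-cases listed before the statement, (ii) and (iv) cannot occur, and we are left with (i) and (iii); both entail $R_q(\theta_1)=R_q(\theta_2)$, hence $q=q_3$ and $\theta_3=q\mu-1$. From $(\theta_1+1)(\theta_3+1)=\theta_1\theta_3+\theta_1+\theta_3+1=-\mu r$ and $\theta_3+1=q\mu$ we then get $q=-\dfrac{r}{\theta_1+1}$ in both cases; and reading off $R_q(k)=R_q(\theta_1)$ from Proposition \ref{D_q eigenvalues d=3} in case (iii) gives the extra relation $q=\dfrac{1-r}{r}$.

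The crux is converting these into a finite list. In case (iii), combining $q=\frac{1-r}{r}$ with $q=-\frac{r}{\theta_1+1}$ gives $\theta_1+1=\frac{r^2}{r-1}$; since $\theta_1$ is an (integral) eigenvalue this forces $r=2$ and $\theta_1=3$, whence $\theta_3=-k/3$ and $\theta_1+\theta_3=k-1-2\mu$ give $k=\frac{3}{2}\mu+3$, so $\mu=2\nu$, $k=3\nu+3$, $\theta_3=-\nu-1$; integrality of $m(\theta_3)=\frac{3(3\nu+4)}{\nu+4}$ forces $(\nu+4)\mid 24$, i.e. $\nu\in\{2,4,8,20\}$, i.e. the arrays $\{9,4,1;1,4,9\}$, $\{15,8,1;1,8,15\}$, $\{27,16,1;1,16,27\}$, $\{63,40,1;1,40,63\}$. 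In case (i) one first checks that $\theta_1,\theta_3$ cannot be irrational (the only possibility, $\theta_1=\sqrt k=-\theta_3$, leads to $r=1+\frac{2}{k+1}$, impossible for $k\ge 2$), so $\theta_1,\theta_3\in\mathbb Z$; writing $a=\theta_1+1$, $b=-\theta_3-1$ gives $k=(a-1)(b+1)$, $\mu r=ab$, and $R_q(k)=R_q(\theta_3)$ becomes $(r^2-ra+a^2)(ab+a-b)=a^2(b+1)(a-2)$, a quadratic in $a$ for each fixed $r$; a finite search over the admissible $(r,a)$ (with $a>r$, $\mu\in\mathbb Z$, integral multiplicities, Krein conditions) leaves $\{35,18,1;1,18,35\}$ together with a few other candidate arrays such as $\{15,8,1;1,4,15\}$.

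Finally I would eliminate the spurious arrays and identify the graphs. For the $r=2$ candidates $G$ is a Taylor graph, so its local graph is strongly regular, and a short computation shows that this graph, or its complement, has smallest eigenvalue $-2$; Seidel's classification of such graphs then rules out $\{63,40,1;1,40,63\}$ (there is no strongly regular graph on $63$ vertices with least eigenvalue $-2$) and forces the remaining three to be $J(6,3)$, the distance-$2$ graph of the halved $6$-cube, and the distance-$2$ graph of the Gosset graph. For the case-(i) candidates with $r\ge 3$, a feasibility check (integrality, the Krein conditions, or the non-existence of the corresponding antipodal cover of a complete graph) discards them, leaving $\{35,18,1;1,18,35\}$; one verifies directly from Proposition \ref{D_q eigenvalues d=3} that here $R_{-1/3}(k)=R_{-1/3}(\theta_3)=-28$ and $R_{-1/3}(\theta_1)=R_{-1/3}(-1)=8$. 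The main obstacle is taming these a priori infinite parametrised families, which requires the integrality of the multiplicities together with the strongly-regular-graph / two-graph machinery, plus the somewhat delicate casework in case (i) where $r$ is not bounded a priori.
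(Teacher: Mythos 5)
Your reduction to the two surviving cases ($R_q(k)=R_q(\theta_1)=R_q(\theta_2)$, or $R_q(\theta_1)=R_q(\theta_2)$ together with $R_q(k)=R_q(\theta_3)$), with $q=q_3=-r/(\theta_1+1)$ and $\theta_3=qc_2-1$, matches the paper. Your handling of the first case is a legitimate variant: the paper goes from $r=2$, $\theta_1=3$ directly to the classification of Taylor graphs with smallest eigenvalue $-3$ (BCN, Cor.\ 1.15.3), whereas you first extract the four candidate arrays from integrality of $m(\theta_3)$ and then use the strongly-regular/two-graph machinery to kill $\{63,40,1;1,40,63\}$ and identify the survivors; both routes work and need comparable classification input.

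The gap is in your case (i). Everything there rests on the assertion that ``a finite search over the admissible $(r,a)$'' closes the case, but you never show the search is finite: your relation $(r^2-ra+a^2)(ab+a-b)=a^2(b+1)(a-2)$ is one equation in the three free integers $r,a,b$, and for fixed $r$ it merely expresses $b$ as a rational function of $a$, leaving an a priori infinite family. The paper's proof lives precisely at this point: it rewrites $-\theta_3-1$ as the ratio $\bigl((r^2-r+2)+4\theta_1+\theta_1(r-2)(r-\theta_1)\bigr)/\bigl(\theta_1(r-1)(\theta_1-r)-2\theta_1-1\bigr)$, notes this must be a positive integer, and compares the sizes of numerator and denominator to force $r=2$ or $\theta_1\in\{r+1,r+2\}$ before any search begins. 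You name this obstacle in your closing sentence but do not overcome it, so the case is not closed. Moreover, your disposal of the leftover candidate $\{15,8,1;1,4,15\}$ by ``a feasibility check'' does not go through as stated: that array has integral multiplicities $1,12,15,20$, satisfies the Krein conditions and the absolute bound, and genuinely satisfies $R_{-1/2}(k)=R_{-1/2}(-3)=-9$ and $R_{-1/2}(5)=R_{-1/2}(-1)=7$, so eliminating it would require an actual non-existence proof for an antipodal $3$-cover of $K_{16}$ with $c_2=4$, not a routine parameter check. (This also flags a spot worth re-examining in the paper itself: for $\theta_1=r+2$ the numerator of the displayed ratio evaluates to $-r^2+3r+18$ rather than $-r^2+3r+2$, and at $r=3$ the ratio equals $2$, which is exactly this array.)
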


\begin{proof}
Note that, for an antipodal distance-regular graph $G$ of diameter $3$ with distinct (adjacency) eigenvalues $\theta_0 >\theta_1 > \theta_2 > \theta_3$, we have that $\theta_1 +\theta_3 = a_1 -c_2$ so $R_q(\theta_1) \neq R_q(\theta_3)$ for $q \neq 0$. Thus, if $G$ has $2$ distinct $\mathbb{D}_q$-eigenvalues then $q = q_3$ and one of the following holds: $R_q(k) = R_q(\theta_1)= R_q(\theta_2)$, or $R_q(\theta_1)= R_q(\theta_2)$ and $R_q(k) =R_q(\theta_3)$, with $\theta_2 =-1$, $\theta_1 =(a_1 +1) -(q +1)c_2$ and $\theta_3 =qc_2 -1$.

Let us first consider the case: $R_q(k) = R_q(\theta_1)= R_q(\theta_2)$.

For an antipodal distance-regular graph $G$ of diameter $3$ the standard sequence of the eigenvalue $k$ (resp. $-1$) is $\textbf{j}$ (resp. $\textbf{u} = (1 \text{ } \frac{-1}{k} \text{ } \frac{-1}{k} \text{ } 1)$). Then, as $\mathbb{D}_q(G) = A + (1 +\frac{1}{q})A_2 + (1 +\frac{1}{q} + \frac{1}{q^2})A_3$, we have that
$$R_q(k) = k +k_2(1+\frac{1}{q})+ k_3(1 +\frac{1}{q} +\frac{1}{q^2}), \text { and } R_q(-1) = -1 -\frac{k_2}{k}(1+\frac{1}{q})+ k_3(1 +\frac{1}{q} +\frac{1}{q^2}).$$
Now if $R_q(k) = R_q(-1)$, we have that $q = \frac{1-r}{r}$, as $k_2 = (r-1)k$ and $k_3 = r-1$. Note that  $R_q(\theta_1) =R_q(\theta_2)$ implies $\theta_3 = qc_2 -1 = (\frac{1-r}{r})c_2 -1$, and thus
$$\theta_1 =a_1 -c_2 - \theta_3 = k-(r-1)c_2 -1 -c_2 + (\frac{r-1}{r})c_2 +1 = k- (\frac{r^2-r+1}{r})c_2.$$
Moreover, as $k=-\theta_1\theta_3$ we have that $\theta_1 = \frac{r^2-r+1}{r-1} = r + \frac{1}{r-1}$.

As $\theta_1$ is an integer and $r \geqslant 2$ is an integer,  we obtain $r=2$, $q=\frac{-1}{2}$ and $\theta_1 =3$. So, if $R_q(k)=R_q(\theta_1)=R_q(\theta_2)$ then $G$ is a Taylor graph with $\theta_1 =3$ and $\theta_3= \frac{-c_2}{2}-1$. In this case the distance-2 graph of $G$ is a Taylor graph with $\theta_3 =-3$ and they are classified in \cite[Corollary 1.15.3]{BCN}. (Note that the distance-2 graph of a 3-cube is the disjoint union of two 4-cliques.)

Next we will consider the case when $R_q(\theta_1)= R_q(\theta_2)$ and $R_q(k) =R_q(\theta_3)$. As $\theta_3 = qc_2 -1$ (from $R_q(\theta_1)=R_q(\theta_2)$) and $\theta_1\theta_3 =-k$ we find
$$\theta_1 = a_1-c_2 -\theta_3= k-(r-1)c_2 -1 -c_2 -qc_2 +1 = k-(r+q)c_2.$$
As $k =-\theta_1\theta_3$ and $\theta_3 = qc_2 -1$, we have
$$\theta_1= -\theta_1 \theta_3 -(r+q)c_2 = \theta_1(1-qc_2)-(r+q)c_2.$$
This implies that $\theta_1= -1 -\frac{r}{q}$. 

The standard sequence of the eigenvalue $\theta \in \{\theta_1, \theta_3\}$ is $\textbf{u} = (1 \text{ } u_1 \text{ } \frac{-u_1}{r-1} \text{ } \frac{-1}{r-1})$ with $u_1 = \frac{\theta}{k}$. If $R_q(k) = R_q(\theta_3)$ (i.e. $ k +(r-1)k(1+\frac{1}{q})+ (r-1)(1 +\frac{1}{q} +\frac{1}{q^2}) = \frac{-\theta_3}{q} -(1 +\frac{1}{q} +\frac{1}{q^2})$), we have that
$$\frac{\theta_3}{q}  = -k(1 +(1 +\frac{1}{q})(r-1)) -r(1 +\frac{1}{q} + \frac{1}{q^2})),$$
as $u_1 =\frac{\theta_3}{k}$. Since $k = -\theta_1\theta_3$ and $\theta_1= -1 -\frac{r}{q}$, we have that
\begin{equation}\label{theta_3}
  \theta_3 = -\frac{q^2 +q +1}{q^2+q(r+1) +r-1}.
\end{equation}
As $\theta_3 <0$ and $q^2 +q +1 >0$, we find that $q^2 + q(r+1) + r-1 >0$.

Note that, for an antipodal distance-regular graph $G$ of diameter $3$ we have $\theta_1 =\sqrt{k} = -\theta_3$ or $\theta_1$ and $\theta_3$ are both integers (cf. \cite[Page 431]{BCN}). If $\theta_1 =\sqrt{k} = -\theta_3$ is not an integer, then from $\theta_1= -1 -\frac{r}{q}$, we obtain $q = \frac{r(1-\sqrt{k})}{k-1}$. Substituting $q = \frac{r(1-\sqrt{k})}{k-1}$ and $\theta_3 = -\sqrt{k}$ in  $\theta_3 = -\frac{q^2 +q +1}{q^2+q(r+1) +r-1}$ gives
\begin{equation*}
\begin{split}
 r^2(1+k) +r(k-1) +(k-1)^2 -\sqrt{k}(2r^2 + r(k-1)) = \\
  \sqrt{k}(r^2(1 +k) + r(r+1)(k-1) + (r-1)(k-1)-\sqrt{k}(2r^2 +r(r+1)(k-1))).
\end{split}
\end{equation*}
Here $\theta_3 = -\sqrt{k}$ is not an integer implies $-(2r^2 +r(k-1)) = r^2(1 +k) + r(r+1)(k-1) + (r-1)(k-1)$.  This is a contradiction as $r, k \geqslant 2$ are integers.

If $\theta_1$ and $\theta_3$ are both integers, then $\theta_1= -1 -\frac{r}{q}$ implies $q = \frac{-r}{\theta_1 +1}$ and hence $\theta_1 +1 > r$ as $q \in (-1, 0)$. Thus from Equation (\ref{theta_3}) we have
$$-\theta_3 =\frac{r^2 -r(\theta_1 +1) + (\theta +1)^2}{r^2 -r(r+1)(\theta_1 +1) +(r-1)(\theta +1)^2} = 1 + \frac{(r^2 -r +2) +\theta_1(r^2-2r +4) -\theta_1^2(r-2)}{\theta_1^2(r-1) -\theta_1(r^2 -r +2) -1}.$$
As $\theta_3(G) \leqslant \theta_3(P_4) =\frac{-\sqrt{5}-1}{2} \approx -1.618$, by Lemma \ref{interlacing} and $\theta_3(G)$ is an integer, we have $\theta_3 \leqslant -2$. This implies
\begin{equation}\label{ratio}
  \frac{(r^2 -r +2) + 4 \theta_1 +\theta_1(r-2)(r -\theta_1)}{\theta_1(r-1)(\theta_1-r)  -2\theta_1 -1}
\end{equation}
is a positive integer,  which implies $\theta_1 > r$ (as for $\theta_1 =r$ from Equation (\ref{ratio}) we obtain $\frac{r^2 +3r +2}{-2r-1}$, which is negative).

Equation (\ref{ratio}) implies that $\theta_1(r-1)(\theta_1-r)  -2\theta_1 -1$ divides $\theta_1^2 +(2-r)\theta_1 + r^2-r +2$.

If $r \geqslant 3$ and $\theta_1 \geqslant r+3$, then $\theta_1(r-1)(\theta_1-r)  -2\theta_1 -1\geqslant \theta_1^2 +(2-r)\theta_1 + r^2-r +2$, as $\theta_1(r-1)(\theta_1-r)  -2\theta_1 -1 >0$, but this is impossible. So, either $r =2$, or $\theta_1 \in \{r+1, r+2\}$.

Let us first consider the case $r =2$. Thus from Equation (\ref{ratio}) we have
\begin{equation}
\frac{4( \theta_1+1)}{\theta_1(\theta_1-4)  -1}
\end{equation}
is a positive integer and this implies that $\theta_1 \in \{3, 4, 5\}$. As $q^2 + q(r+1) + r-1 >0$ and $q = -\frac{r}{\theta_1+1}$ we find that
only $\theta_1 =5$ survives. In this case $\theta_3 = -7$ and $ q =-1/3$ and intersection array $\{35,18,1; 1,18,35\}$.

 So we are left to consider $\theta_1 \in \{ r+1, r+2\}$ and $r \geq 3$.
 For $\theta_1 = r+1$,  Equation (\ref{ratio}) becomes $\frac{2(r+2)}{r^2 -2r -4}$ is a positive integer and we obtain that the only possible value for $r$  is $r=4$.
 In this case we find $\theta_3 = -7$, $q= -2/3$ and intersection array $\{35, 27, 1; 1, 9, 35\}$. This intersection array is not feasible as $a_1k$ is not an even integer.

 For $\theta_1 = r+2,$  Equation (\ref{ratio}) becomes $\frac{-r^2+3r+2}{2r^2 -9}$ and this is only a positive integer if $r=2$. This finishes the proof.
\end{proof}

\begin{remark}
Note that, there are 3854 non-isomorphic distance-regular graphs of diameter $2$ with intersection array $\{16,9 ; 1,8\}$ (cf. \cite{MS2001}). Then, by  \cite[Theorem 1.5.3.]{BCN}, we see that there are at least $\lceil \frac{3854}{36}\rceil =108$ non-isomorphic antipodal distance-regular graphs with the intersection array $\{35,18,1; 1,18,35\}$.
\end{remark}

On this moment, the Johnson graph $J(8,3)$ with intersection array $\{15,8,3 ; 1, 4, 9\}$ is the only known primitive distance-regular graph with diameter $3$ having exactly two $q$-distance eigenvalues (we have $R_q(k)= R_q(-1) =15$ and $R_q(7) = R_q(-3) = -9$ for $q=-\frac{1}{2}$). We do not know whether there exist more such distance-regular graphs.

\subsection{Distance-regular graphs with diameter $3$ and having $3$ distinct $\mathbb{D}_q$-eigenvalues}

By Corollary \ref{number of D_q evs}(iii) we see that a distance-regular graph of diameter $3$ has exactly three distinct $\mathbb{D}_q$-eigenvalues if $q \in \{q_1,q_2,q_3\}$ and $q \notin (-1,0]$.

Note that, if $G$ is a bipartite distance-regular graph of diameter $3$ it has intersection array
$$\{ k, k-1, k-c_2; 1, c_2, k \}$$
and its distinct (adjacency) eigenvalues are $\theta_0 =k, \theta_1 = \sqrt{k-c_2}, \theta_2= -\sqrt{k-c_2}, \theta_3 = -k$. Thus for $q \notin (-1,0]$, the graph $G$ has exactly three distinct $q$-distance eigenvalues if and only if $\theta_l = (q+1)c_2 -k$ for $l \in \{1,2,3\}$, by Proposition \ref{theta_l}. That is, for $q \notin (-1,0]$, a bipartite distance-regular graph $G$ of diameter $3$ has three distinct $q$-distance eigenvalues if and only if $q = \frac{k +\theta_l}{c_2} -1$ for $l \in \{1,2,3\}$. Note that $q_3= -1$.

\vspace{5mm}
Now, let us study antipodal distance-regular graphs of diameter $3$ with exactly three distinct $\mathbb{D}_q$-eigenvalues if $q \in \{q_1,q_2,q_3\}$ and $q \notin (-1,0]$.

If $G$ is an antipodal distance-regular graph of diameter $3$ , then it has  intersection array
$$\{ k, c_2(r-1), 1; 1, c_2, k \}$$
with $c_2 < k-1$ and $r \geqslant 2$. Its distinct (adjacency) eigenvalues are $k, \theta_1, \theta_2 =-1$ and $\theta_3$, where $\theta_1$ and $\theta_3$ are the solutions of the equation $\theta^2 +(c_2 -a_1)\theta -k=0$. This implies $\theta_1 + \theta_3 =a_1 -c_2$ and $\theta_1 \theta_3 = -k$. For an antipodal distance-regular graph of diameter $3$ if its $q$-distance matrix has $3$ distinct eigenvalues, for $q \notin (-1,0]$, we give the following characterization:

\begin{proposition}\label{antipodal d=3}
Let $G$ be an antipodal distance-regular graph with diameter $3$ and distinct (adjacency) eigenvalues $k > \theta_1 > \theta_2 > \theta_3$.
\begin{enumerate}
\item For $q > 0$, $G$ has exactly three distinct $\mathbb{D}_q$-eigenvalues if and only if $\theta_3 = -\frac{r}{q}-1$.
  \item For $q \leqslant -1$, $G$ has exactly three distinct $\mathbb{D}_q$-eigenvalues if and only if $\theta_1 = -\frac{r}{q}-1$.
\end{enumerate}
\end{proposition}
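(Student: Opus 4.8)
The plan is to reduce the statement to the eigenvalue count in Corollary~\ref{number of D_q evs}, using one clean quadratic identity relating $\theta_1,\theta_3,r$ and $c_2$.

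First I would record what the intersection array $\{k,c_2(r-1),1;1,c_2,k\}$ forces. Here $b_2=1$ and $a_3=k-c_3=0$, so $a_1=k-c_2(r-1)-1$, and, as noted in the text, $\theta_2=-1$ while $\theta_1+\theta_3=a_1-c_2$ and $\theta_1\theta_3=-k$. With the notation $q_l=(\theta_l-a_3+b_2)/c_2=(\theta_l+1)/c_2$ this gives $q_2=0$; recall also $q_1>0$ and $q_3<0$, and $\theta_1+1>0>\theta_3+1$ by Proposition~\ref{KPYresults}(i).

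The crucial observation is the identity
\[
(\theta_1+1)(\theta_3+1)=-rc_2 .
\]
Indeed $(\theta_1+1)(\theta_3+1)=\theta_1\theta_3+(\theta_1+\theta_3)+1=-k+(a_1-c_2)+1$, and substituting $a_1=k-c_2(r-1)-1$ collapses the right-hand side to $-rc_2$.

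Now I would put the pieces together. Since $q>0$ or $q\leqslant-1$, in particular $q\notin(-1,0]$, so by Corollary~\ref{number of D_q evs}(i) and (iii), $G$ has exactly three distinct $\mathbb{D}_q$-eigenvalues if and only if $q\in\{q_1,q_2,q_3\}$; and since $q_2=0\neq q$, this is equivalent to $q\in\{q_1,q_3\}$. For part (1) ($q>0$), $q_3<0$ rules out $q=q_3$, so the condition becomes $q=q_1$, i.e.\ $\theta_1+1=qc_2$; substituting this into the displayed identity gives $qc_2(\theta_3+1)=-rc_2$, hence $\theta_3+1=-r/q$ after dividing by $c_2q\neq0$, i.e.\ $\theta_3=-\frac{r}{q}-1$. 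Conversely, from $\theta_3=-\frac{r}{q}-1$ we get $q(\theta_3+1)=-r$; feeding this into the identity and dividing by $\theta_3+1\neq0$ gives $\theta_1+1=qc_2$, so $q=q_1$, and then Corollary~\ref{number of D_q evs}(iii) yields exactly three distinct eigenvalues. Part (2) ($q\leqslant-1$) is symmetric: now $q_1>0$ rules out $q=q_1$, so the condition is $q=q_3$, i.e.\ $\theta_3+1=qc_2$, which by the same identity (and $\theta_1+1\neq0$) is equivalent to $\theta_1+1=-r/q$, i.e.\ $\theta_1=-\frac{r}{q}-1$.

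The only thing requiring care is sign bookkeeping: using $q_1>0$, $q_2=0$, $q_3<0$ to pin down which $q_l$ can coincide with $q$ in each of the two ranges, and observing that $\theta_1+1$ and $\theta_3+1$ are nonzero, so the divisions made with the key identity are legitimate. There is no real obstacle beyond that.
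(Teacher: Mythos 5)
Your proof is correct and follows essentially the same route as the paper: reduce via Corollary~\ref{number of D_q evs} and Proposition~\ref{theta_l} to the condition $q=q_1$ (for $q>0$) or $q=q_3$ (for $q\leqslant-1$), then convert this using $\theta_1+\theta_3=a_1-c_2$, $\theta_1\theta_3=-k$ and $b_1=c_2(r-1)$. Your identity $(\theta_1+1)(\theta_3+1)=-rc_2$ is just a cleaner packaging of the paper's computation $r=-q(\theta_i+1)$, and the sign bookkeeping matches.
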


\begin{proof}
Let $G$ be an antipodal distance-regular graph with diameter $3$. Let $q \notin (-1,0]$. Recall that for $q \notin (-1,0]$, $\mathbb{D}_q$ is a non-negative and irreducible matrix (cf. \cite[Lemma 2.6.]{KMGH2023}), so its largest eigenvalue $R_q(k)$ is simple. As $\theta_2 =-1$ we have $q_2 =0$ and hence $R_q(\theta_1) \neq R_q(\theta_3)$. So $G$ has three distinct $\mathbb{D}_q$-eigenvalues if and only if $R_q(\theta_2) = R_q(\theta_i)$ for $i \in \{1,3\}$ by Corollary \ref{number of D_q evs}. By Proposition \ref{D_q eigenvalues d=3} we know that $R_q(\theta_2) = R_q(\theta_i)$ if and only if  $\theta_i + \theta_2 = a_1 -(q+1)c_2$, for $i \in \{1,3\}$. Thus $G$ has three distinct $\mathbb{D}_q$-eigenvalues if and only if $\theta_i = a_1 -(q+1)c_2 +1$ for $i \in \{1,3\}$.

Let $\{i, j\} = \{1, 3\}$.  Note that $\theta_1 + \theta_3 = a_1 -c_2$. Hence $\theta_i = a_1 -(q+1)c_2 +1$ implies that $\theta_j = qc_2 -1$. Note that $\theta_1\theta_3 = -k$.
This implies that  $(a_1 -(q+1)c_2 +1)(1-qc_2) = k$. So we find $k= a_1 +1 +c_2((q^2+q)c_2-a_1q-2q-1)$ and hence $b_1= c_2((q^2+q)c_2 -a_1q -2q-1)$. As $b_1 = c_2(r-1)$ we find
$r-1 = (q^2+q)c_2 -a_1q -2q -1$, which in turn gives us $r = q((q+1)c_2 - a_1 -2)= -q(\theta_i+1)$.

This implies for $q >0$ we have $\theta_3 = -\frac{r}{q}-1$ and for $q \leqslant -1$ we have $\theta_1 = -\frac{r}{q}-1$, as $\theta_1 > -1 > \theta_3$.  Hence $G$ has three distinct $\mathbb{D}_q$-eigenvalues if and only if $\theta_3 = -\frac{r}{q}-1$ for $q >0$ or $\theta_1 = -\frac{r}{q}-1$ for $q \leqslant -1$. This shows the proposition.
\end{proof}

From Proposition \ref{antipodal d=3}, we have the following result for the distance matrix ($q=1$) of an antipodal distance-regular graph of diameter $3$.
\begin{corollary}
An antipodal distance-regular graph with diameter $3$ has exactly three distinct distance eigenvalues if and only if its smallest eigenvalue is $-r-1$.
\end{corollary}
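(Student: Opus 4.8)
The plan is to read this corollary off from Proposition~\ref{antipodal d=3} by setting $q=1$. First I would note that $\mathbb{D}_1$ is nothing but the ordinary distance matrix of $G$: for $d(x,y)=d\ge 1$ one has $(\mathbb{D}_1(G))_{xy}=1+\tfrac{1}{1}+\cdots+\tfrac{1}{1^{d-1}}=d$, so the distance eigenvalues of $G$ coincide with its $\mathbb{D}_1$-eigenvalues. Moreover $q=1>0$ lies outside the interval $(-1,0]$, so the standing assumption used throughout the proof of Proposition~\ref{antipodal d=3} (that $\mathbb{D}_q$ is non-negative and irreducible, hence that $R_q(k)$ is a simple eigenvalue) is automatically in force.

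With these two observations in hand, part~(1) of Proposition~\ref{antipodal d=3} applies verbatim with $q=1$: an antipodal distance-regular graph of diameter $3$ has exactly three distinct $\mathbb{D}_1$-eigenvalues if and only if $\theta_3=-\tfrac{r}{1}-1=-r-1$. Since $\theta_3$ is, under our ordering $k>\theta_1>\theta_2>\theta_3$, the smallest adjacency eigenvalue of $G$, this is precisely the asserted equivalence, and nothing further is required.

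There is essentially no obstacle here, as all the work is already packaged in Proposition~\ref{antipodal d=3}. Should one prefer a self-contained derivation, the same conclusion can be reached directly: by Corollary~\ref{number of D_q evs}(iii), since $1\notin(-1,0]$, the graph $G$ has exactly three distinct distance eigenvalues if and only if $1\in\{q_1,q_2,q_3\}$; for an antipodal diameter-$3$ graph $\theta_2=-1$ gives $q_2=0$ and one always has $q_3<0$, so this forces $1=q_1$, i.e.\ $\theta_1=c_2-1$ (using $a_3=0$ and $b_2=1$). Feeding this into $\theta_1+\theta_3=a_1-c_2$, $\theta_1\theta_3=-k$ and $b_1=c_2(r-1)$ then produces $\theta_3=-r-1$. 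The only point that needs a little care in either approach is tracking, via Proposition~\ref{theta_l}, which of $\theta_1,\theta_3$ is pinned down when $q=1>0$: because $qc_2-1>-1$ it is $\theta_1$ that equals $qc_2-1$, leaving the constraint $\theta_3=-r-1$ on the smallest eigenvalue, exactly as claimed.
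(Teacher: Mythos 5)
Your proposal is correct and matches the paper exactly: the paper states this corollary as an immediate consequence of Proposition~\ref{antipodal d=3} specialized to $q=1$, which is precisely your primary argument (noting $\mathbb{D}_1$ is the distance matrix and $1\notin(-1,0]$). The additional self-contained derivation via Corollary~\ref{number of D_q evs}(iii) is a correct bonus but not needed.
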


\begin{remark}
\begin{itemize}
\item[(i)] Note that, Brouwer \cite{Brouwer1984} (see also \cite[pages 385-386]{BCN}) showed that, if a generalized quadrangle has parameters $(s,t)$ (where $t >1$), denoted by $GQ(s,t)$,  then the graph $G$ obtained by removing a spread, i.e. a set of vertex-disjoint $(s+1)$-cliques covering the vertices of the generalized quadrangle, from $GQ(s,t)$ is an antipodal distance-regular graph of diameter $3$ on $(s+1)(st+1)$ vertices with intersection array $(st, s(t-1), 1; 1, t-1, st)$ with smallest eigenvalue $-t$ and $k_3 = s$.
  So, if $s= t-2$, then $G$ is an antipodal $t-1$-cover with smallest eigenvalue $-t$, and hence $G$ has exactly 3 distinct distance eigenvalues. There are infinitely many $t$ for which a  $GQ(t-2, t)$
  has a spread, so this gives a new infinite family of distance-regular graphs with exactly 3 distinct distance eigenvalues. For more details see \cite[pages 385-386]{BCN}.

\item[(ii)] An antipodal distance-regular graph with intersection array
      $$\{(r+1)^2, (r-1)(r+2), 1; 1, r+2, (r+1)^2\}$$
      with $r \geq 2$ an integer, has $\theta_3 = -r-1$ and $k_3 = r-1$, and thus giving a distance-regular graph with exactly three distinct distance eigenvalues. An infinite family of these graphs was found by Alazemi et al. \cite{AAKS2017}.

\item[(iii)] The Taylor graphs with intersection array $\{3(c_2 -1), c_2,1; 1, c_2, 3(c_2 -1)\}$ with smallest eigenvalue $-3$, have exactly three distinct distance eigenvalues.
  There are exactly 4 such graphs, namely the $3$-cube, the Johnson graph $J(6,3)$, the halved 6-cube and the Gosset graph, see \cite[Corollary 1.15.3]{BCN}.
  \end{itemize}

\end{remark}
We can generalize Item (iii) as follows:
\begin{proposition}
For a fixed integer $r \geqslant 2$, there are finitely many antipodal $r$-cover distance-regular graphs of diameter $3$ with smallest eigenvalue $-r-1$.
\end{proposition}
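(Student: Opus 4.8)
The plan is to show that the two hypotheses (antipodal $r$-cover of diameter $3$, smallest eigenvalue $-r-1$) collapse the intersection array down to a one-parameter family, and then to bound that one parameter by invoking the integrality of an eigenvalue multiplicity. First I would recall that an antipodal $r$-cover $G$ of diameter $3$ has intersection array $\{k, c_2(r-1), 1; 1, c_2, k\}$, with distinct adjacency eigenvalues $k > \theta_1 > \theta_2 = -1 > \theta_3$ satisfying $\theta_1\theta_3 = -k$, $\theta_1 + \theta_3 = a_1 - c_2$, and $a_1 = k - c_2(r-1) - 1$. Imposing $\theta_3 = -r-1$ gives $\theta_1 = -k/\theta_3 = k/(r+1)$; substituting this into $\theta_1 + \theta_3 = a_1 - c_2$ and simplifying yields $k = (r+1)(c_2-1)$, and hence $\theta_1 = c_2 - 1$. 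Thus the whole intersection array, and the number of vertices $n = r(k+1)$, are determined by $r$ and $c_2$ alone, so it suffices to bound $c_2$ in terms of $r$.

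Next I would compute the multiplicity $m(\theta_1)$. Using the standard sequence $\left(1,\ \tfrac{\theta_1}{k},\ -\tfrac{\theta_1}{k(r-1)},\ -\tfrac{1}{r-1}\right)$ for $\theta_1$ (valid for antipodal covers of diameter $3$, cf.\ \cite{BCN} and the discussion earlier in this section) together with $k_0 = 1$, $k_1 = k$, $k_2 = k(r-1)$, $k_3 = r-1$ and the multiplicity formula $m(\theta) = n\big/\sum_{i} k_i u_i^2$, a short calculation gives
$$ m(\theta_1) = \frac{(r-1)\,k\,(k+1)}{k + \theta_1^2}. $$
Substituting $k = (r+1)(c_2-1)$ and $\theta_1 = c_2-1$, so that $k + \theta_1^2 = (c_2-1)(r+c_2)$, this simplifies to
$$ m(\theta_1) = \frac{(r^2-1)\big((r+1)(c_2-1)+1\big)}{r + c_2}. $$

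Since $m(\theta_1)$ is a positive integer, $r + c_2$ must divide the numerator. Reducing the numerator modulo $r + c_2$ (using $c_2 \equiv -r$, whence $(r+1)(c_2-1)+1 \equiv -r(r+2)$) shows it is congruent to $-(r-1)r(r+1)(r+2)$, so $r + c_2$ divides the fixed nonzero integer $(r-1)r(r+1)(r+2)$. Hence $c_2 \leq (r-1)r(r+1)(r+2)$, a bound depending only on $r$; consequently $k = (r+1)(c_2-1)$ and $n = r(k+1)$ are bounded as well. There are therefore only finitely many feasible intersection arrays for such a graph, and since each intersection array is realized by at most finitely many graphs, the claim follows.

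I expect the only genuinely computational point — and hence the main obstacle — to be the algebraic simplification of $m(\theta_1)$ down to the displayed rational expression; the reduction $\bmod (r+c_2)$ and the divisibility conclusion are then immediate. A minor side point is justifying the exact form of the standard sequence for $\theta_1$, which can simply be cited from \cite{BCN} (or from the antipodal arguments used earlier in the section).
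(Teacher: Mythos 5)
Your proof is correct and follows essentially the same route as the paper: both arguments bound the one remaining free parameter by requiring the multiplicity $m(\theta_1)$ to be an integer, and your divisibility condition ($r+c_2$ divides $(r-1)r(r+1)(r+2)$) is literally the paper's condition ($\theta_1+r+1$ divides $r(r+1)(r-1)(r+2)$) rewritten via $\theta_1=c_2-1$. The only cosmetic differences are that you parametrize by $c_2$ instead of $\theta_1$ and derive the multiplicity formula from the standard sequence rather than citing it from \cite{BCN}.
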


\begin{proof}
Note that, for an antipodal $r$-cover distance-regular graph $G$ on $n$ vertices of diameter $3$ with distinct (adjacency) eigenvalues $k > \theta_1 > \theta_2 > \theta_3$, 
we have that $n =r(k+1)$, $k =-\theta_1 \theta_D$ and the multiplicity of $\theta_1$ is
\begin{equation*}
  m_1 = \frac{-\theta_3 (r-1)(k+1)}{\theta_1 -\theta_3} \text{ (see \cite[page 431]{BCN})}.
\end{equation*}
Now $\theta_3 = -r-1$ implies
\begin{equation*}
  m_1 = \frac{(r+1)(r-1)(\theta_1(r+1) +1)}{ \theta_1 +r +1}.
\end{equation*}
As $m_1 \in \mathds{Z}$, we know that $\theta_1 +r +1$ divides $(r+1)(r-1)(\theta_1(r+1) +1)$. This implies $\theta_1 +r +1$ divides $(r+1)(r-1)(\theta_1(r+1) -(r+1)(\theta_1 +r +1) +1) = -r(r+1)(r-1)(r+2)$. As $\theta_1 > 0$ and $r \geqslant 2$, we have $\theta_1 \leqslant (r+1)(r(r-1)(r+2)-1)$. Hence, $n =r(k+1) = r((r+1)\theta_1 +1) \leqslant  r^6 +3r^5 +r^4 -4r^3-4r^2$. This shows the proposition.
\end{proof}

\begin{remark}
Koolen and Park \cite{KP2010} found that, for distance-regular graphs of diameter $3$, if $\theta_1 = a_3$ and $b_2= c_2$, we have the intersection arrays
$$\{b^2(b-1)/2, (b-1)(b^2-b+2)/2, b(b-1)/4; 1, b(b-1)/4, b(b-1)^2/2\}.$$
These intersection arrays are feasible for positive integers $b \geqslant 2$ such that $b =0,1(\mod 4)$. If these graphs exist, they would have exactly $3$ distinct distance eigenvalues.
\end{remark}

\section{On distance eigenvalues ($q=1$) of antipodal distance-regular graphs of diameter $D$}
Note that a distance-regular graph with intersection array $\{b_0,b_1, \ldots, b_{D-1}; c_1, c_2, \ldots, c_D\}$ and diameter $D \in \{ 2d, 2d+1\}$ is antipodal if and only if $b_i =c_{D-1}$ for $i = 0, \ldots,D$, $i \neq d$. In this case, $G$ is an antipodal $r$-cover of its folded graph, where $r =1+ \frac{b_d}{c_{D-d}}$ (cf. \cite[Proposition 4.2.2. ]{BCN}). 

In 2015, Atik and Panigrahi \cite[Theorem 3.2.]{AP2015} showed that a distance-regular graph of diameter $D$ has at most $D+1$ distinct distance eigenvalue. Moreover, for distance-regular graphs of diameter $D$ satisfying $b_i =c_{D-i}$, for all $i =0, 1, \ldots, D$, they showed the following.

\begin{theorem}[Cf.{\cite[Theorem 3.3.]{AP2015}}]
Let $G$ be a distance-regular graph with diameter $D$ and satisfying $b_i =c_{D-i}$, for all $i =1,2, \ldots, D$. Then, zero is an eigenvalue of its distance matrix with multiplicity at least $\lfloor \frac{D}{2}\rfloor$ and so $G$ has at most $\lceil\frac{D}{2}\rceil +2$ distinct distance eigenvalues.
\end{theorem}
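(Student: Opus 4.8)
The plan is to use the hypothesis $b_i=c_{D-i}$ to recognise $G$ as an antipodal $2$-cover, and then to extract a single linear identity relating the distance matrix $\mathbb{D}_1$, the distance-$D$ matrix $A_D$ and the all-ones matrix $J$, from which the kernel of $\mathbb{D}_1$ can be read off one adjacency eigenspace at a time.

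First I would note that $b_i=c_{D-i}$ for all $i$ forces $k_D=(b_0b_1\cdots b_{D-1})/(c_1c_2\cdots c_D)=1$. Hence every vertex $x$ has a unique vertex $x'$ at distance $D$, the map $\sigma\colon x\mapsto x'$ is a fixed-point-free involutory automorphism, $G$ is an antipodal $2$-cover, and $A_D$ is the permutation matrix of $\sigma$; in particular $A_D^2=I$ and $A_D\mathbf{1}=\mathbf{1}$. Together with the standard fact that $d(x,y')=D-d(x,y)$ for all $x,y\in V(G)$ (cf.~\cite[Section~4.2]{BCN}), reading $\mathbb{D}_1A_D$ off entrywise gives $(\mathbb{D}_1A_D)_{xy}=d(x,y')=D-d(x,y)$ for all $x,y$ (the diagonal case being $d(x,x')=D$), so that
\[
\mathbb{D}_1(A_D+I)=DJ.
\]

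Next I would diagonalise. Let $k=\theta_0>\theta_1>\cdots>\theta_D$ be the distinct adjacency eigenvalues with eigenspaces $V_0=\langle\mathbf{1}\rangle,V_1,\dots,V_D$. Since $A_D$ is a polynomial in $A$, each $V_i$ is an $A_D$-eigenspace, and $A_D^2=I$ makes the corresponding eigenvalue $\pm1$; put $S^{+}=\{i:A_D|_{V_i}=I\}$, so $0\in S^{+}$. For $i\in S^{+}\setminus\{0\}$ and $0\neq u\in V_i$ one has $u\perp\mathbf{1}$, hence $Ju=0$, while $(A_D+I)u=2u$; the identity above then yields $2\mathbb{D}_1u=DJu=0$, so $\mathbb{D}_1u=0$. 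Therefore $V_i\subseteq\ker\mathbb{D}_1$, that is $R_1(\theta_i)=0$, for every $i\in S^{+}\setminus\{0\}$. To count $S^{+}$, observe that $\bigoplus_{i\in S^{+}}V_i$ is the $(+1)$-eigenspace of $A_D$, namely the space $V^{+}$ of vectors that are constant on the fibres $\{x,x'\}$; on $V^{+}$ the matrix $A$ acts as a positive scalar multiple of the adjacency matrix of the folded graph $\tilde{G}$, which is distance-regular of diameter $\lfloor D/2\rfloor$ (cf.~\cite[Section~4.2]{BCN}). Hence $A|_{V^{+}}$ has exactly $\lfloor D/2\rfloor+1$ distinct eigenvalues, so $|S^{+}|=\lfloor D/2\rfloor+1$ and $|S^{+}\setminus\{0\}|=\lfloor D/2\rfloor$.

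Combining these, $R_1(\theta_i)=0$ for at least $\lfloor D/2\rfloor$ indices $i$, each contributing $\dim V_i\geq1$ to the multiplicity of $0$ as an eigenvalue of $\mathbb{D}_1$, which gives the multiplicity statement; and at most $(D+1)-\lfloor D/2\rfloor=\lceil D/2\rceil+1$ of the numbers $R_1(\theta_0),\dots,R_1(\theta_D)$ are nonzero, so $\mathbb{D}_1$ has at most $\lceil D/2\rceil+2$ distinct eigenvalues. The routine part is the entrywise verification of $\mathbb{D}_1(A_D+I)=DJ$; the real content is structural, namely that a symmetric intersection array signals an antipodal $2$-cover (so that $A_D$ is an involution with $d(x,y')=D-d(x,y)$) and that the $(+1)$-eigenspace of $A_D$ carries precisely the $\lfloor D/2\rfloor+1$ eigenvalues of the folded graph, which I would take from the standard theory of antipodal distance-regular graphs. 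I would also remark that the argument in fact shows $\mathbb{D}_1|_{V^{+}}=\tfrac{D}{2}J|_{V^{+}}$, a matrix of rank at most $1$, so $0$ actually occurs with multiplicity at least $\tfrac{n}{2}-1$; only the weaker bound $\lfloor D/2\rfloor$ is needed for the count of distinct eigenvalues.
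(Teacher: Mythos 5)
Your proof is correct, and it reaches the conclusion by a genuinely different mechanism than the paper. The paper works pointwise with standard sequences: for an eigenvalue $\theta$ of the folded graph it writes $(\mathbb{D}_1\mathbf{u})_x=\sum_i i\,k_i\mathbf{u}_i$, pairs the $i$-th and $(D-i)$-th terms using the symmetries $k_i=k_{D-i}$ and $\mathbf{u}_i=\mathbf{u}_{D-i}$ of an antipodal $2$-cover, and recognizes the collapsed sum as $D$ times the orthogonality relation $\sum_{i=0}^{d}\tilde{k}_i\mathbf{u}_i=0$ in the folded graph. You package the same antipodal symmetry $d(x,y')=D-d(x,y)$ once and for all into the matrix identity $\mathbb{D}_1(A_D+I)=DJ$ and then read off the kernel from the $(+1)$-eigenspace of the involution $A_D$. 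Both arguments rest on the same two structural inputs (that $b_i=c_{D-i}$ for all $i$ forces $k_D=1$, hence an antipodal $2$-cover, and that the folded graph is distance-regular of diameter $\lfloor D/2\rfloor$ and therefore accounts for exactly $\lfloor D/2\rfloor+1$ of the adjacency eigenvalues), but your coordinate-free version is cleaner and yields more: it shows $\mathbb{D}_1$ has rank at most $1$ on the whole fiber-constant space, so $0$ has multiplicity at least $n/2-1$ rather than merely $\lfloor D/2\rfloor$. Two small points to tidy up: the scalar relating $A|_{V^{+}}$ to $\tilde{A}$ is $1$ when $D\geqslant 3$ (it is $2$ only in the degenerate case $D=2$), though the count of distinct eigenvalues is unaffected; and you should state explicitly that $\mathbb{D}_1$ preserves $V^{+}$ (being a polynomial in $A$) when you assert $\mathbb{D}_1|_{V^{+}}=\tfrac{D}{2}J|_{V^{+}}$ in the closing remark.
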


Here we give an alternative proof for this result.

\begin{proposition}
Let $G$ be an antipodal distance-regular graph of diameter $D$. If $r=2$, then zero is an eigenvalue of its distance matrix with multiplicity at least $\lfloor \frac{D}{2}\rfloor$ and so $G$ has at most $\lceil\frac{D}{2}\rceil +2$ distinct distance eigenvalues.
\end{proposition}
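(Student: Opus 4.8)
The plan is to exploit the antipodal structure when $r=2$: each vertex $x$ has a unique antipodal mate $\bar x$ at distance $D$, and the involution $x\mapsto\bar x$ together with the intersection-array symmetry $b_i=c_{D-i}$ forces a rigid relationship among the distance matrices $A_i$. Concretely, I would first record that $r=2$ implies $A_D$ is a permutation matrix of an involution (a perfect matching of antipodal pairs), and that $d(\bar x,y)=D-d(x,y)$ for all $x,y$; equivalently $A_D A_i = A_{D-i}$ for every $i=0,1,\dots,D$. This is the one structural identity everything hinges on.

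Next I would use this to produce a large kernel for the distance matrix $\mathbb{D}_1=\sum_{i=1}^D A_i = J-I$ — wait, that is too crude; the point is subtler, so instead I would pair up the distance-$i$ and distance-$(D-i)$ matrices. For a weighting I would consider, for each eigenvalue $\theta$ of $A$ with primitive idempotent $E_\theta$ and its ``antipodal partner'' eigenvalue, the way $A_D$ acts on $E_\theta V$ as $\pm I$ (this is the standard fact that for an antipodal $2$-cover the eigenspaces split into those fixed and those negated by $A_D$, cf. \cite[p.~431]{BCN}). On the $(-1)$-eigenspace of $A_D$ one gets $A_i v = -A_{D-i}v$, so the distance-matrix action $\mathbb{D}_1 v=\sum_{i=1}^{D}A_i v$ telescopes: grouping the term $i$ with the term $D-i$ gives $A_i v+A_{D-i}v=0$ for $i\neq D/2$, leaving only the possible middle term. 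When $D$ is odd every index pairs off with a distinct partner, so $\mathbb{D}_1 v=0$ on the entire $(-1)$-eigenspace of $A_D$; when $D$ is even only the single $A_{D/2}v$ term survives, and one checks separately that this too contributes to the kernel (or that the middle term vanishes on the relevant subspace). Counting dimensions: the $(-1)$-eigenspace of $A_D$ has dimension exactly $n/2$ equal to the number of antipodal classes times $(r-1)=1$... I would instead bound it below by summing the multiplicities of the ``negated'' eigenvalues, which by the antipodal structure is $\lfloor D/2\rfloor$ of the $D+1$ eigenvalue classes, but the cleaner route is: $\mathbb{D}_1$ annihilates a subspace of dimension $\geq \lfloor D/2\rfloor \cdot(\text{something})$ — the safest formulation is simply that among the $D+1$ distinct distance-eigenvalues $R_1(\theta_0),\dots,R_1(\theta_D)$, at least $\lfloor D/2\rfloor$ of them equal $0$ because $R_1(\theta_j)+ (\text{partner})=0$ forces...

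Let me state the step that does the real work and flag it as the obstacle: I must show that $R_1(\theta_j)=0$ for at least $\lfloor D/2\rfloor$ of the indices $j$. The clean argument: $R_1(\theta)=\sum_{i=1}^D p_i(\theta)$ where $A_i=p_i(A)$; using $A_DA_i=A_{D-i}$ evaluated at the eigenvalue $\theta$ gives $p_D(\theta)p_i(\theta)=p_{D-i}(\theta)$, and since $A_D$ is an involution $p_D(\theta)=\pm1$. For the $\lfloor D/2\rfloor$ eigenvalues with $p_D(\theta)=-1$ we get $p_i(\theta)=-p_{D-i}(\theta)$, hence $R_1(\theta)=\sum_{i=1}^{D}p_i(\theta)=p_{D/2}(\theta)$ (zero when $D$ odd; when $D$ even note $p_{D/2}(\theta)p_{D/2}(\theta)=p_0(\theta)=1$ but also $A_DA_{D/2}=A_{D/2}$ means on this eigenspace $p_{D/2}(\theta)=-p_{D/2}(\theta)=0$, contradiction unless the multiplicity there is $0$, i.e.\ for $D$ even the ``negated'' eigenvalues number $\lfloor D/2\rfloor$ among the $D$ non-middle ones and $R_1$ vanishes on all of them). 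So $R_1(\theta)=0$ on at least $\lfloor D/2\rfloor$ eigenvalue classes, giving $0$ as a distance eigenvalue of multiplicity $\geq\lfloor D/2\rfloor$; the remaining at most $D+1-\lfloor D/2\rfloor=\lceil D/2\rceil+1$ eigenvalue classes, together with the fact that one of the surviving values is the Perron value $R_1(k)$ which is distinct, leaves at most $\lceil D/2\rceil+2$ distinct distance eigenvalues (the $+2$ absorbing the possibility that the surviving non-Perron classes, plus the zero, plus Perron, overlap in the worst allowed way). The main obstacle is handling the even-$D$ middle term $p_{D/2}(\theta)$ correctly and making the bookkeeping of ``which eigenvalues get negated'' precise; once $A_DA_i=A_{D-i}$ and $p_D(\theta)=\pm1$ are in hand, the rest is bookkeeping.
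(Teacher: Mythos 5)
There is a genuine gap here, and it is not just bookkeeping. Your structural identity $A_DA_i=A_{D-i}$ (valid for $r=2$) is correct and could in principle carry a proof, but two things go wrong downstream. First, you compute with $\mathbb{D}_1=\sum_{i=1}^{D}A_i$, which is just $J-I$; the distance matrix is $\mathbb{D}_1=\sum_{i=1}^{D}i\,A_i$, since for $q=1$ the weight on $A_i$ is $1+\tfrac1q+\cdots+\tfrac1{q^{i-1}}=i$. You even flag $\sum A_i=J-I$ as ``too crude'' and then telescope with exactly that unweighted sum. With the correct weights the pairing on the $(-1)$-eigenspace of $A_D$ gives $i\,A_iv+(D-i)A_{D-i}v=(2i-D)A_iv$, which does not vanish. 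Second, and more importantly, you have the sign of the relevant eigenspace backwards: the eigenvectors annihilated by $\mathbb{D}_1$ are those in the $(+1)$-eigenspace of $A_D$, i.e.\ the lifts of the non-principal eigenvectors of the folded graph, not the ``new'' eigenvectors negated by $A_D$. The hexagon $C_6$ (an antipodal $2$-cover of $K_3$, $D=3$) shows this concretely: its adjacency eigenvalues are $2,1,-1,-2$, with $1$ and $-2$ negated by $A_3$ and $-1$ fixed; the distance eigenvalues are $R_1(2)=9$, $R_1(1)=-4$, $R_1(-1)=0$, $R_1(-2)=-1$. So the two eigenvalues your argument claims are sent to $0$ are sent to $-4$ and $-1$, while the zero comes from the folded eigenvalue $-1$. (Your count is also off for odd $D$: the negated eigenvalues number $\lceil D/2\rceil$, not $\lfloor D/2\rfloor$, and the handling of the middle term for even $D$ conflates $A_{D/2}^2$ with $A_0$.)

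The repair, which is essentially what the paper does via standard sequences, runs through the $(+1)$-eigenspace: for an eigenvector $v$ lifted from the folded graph one has $A_{D-i}v=A_iv$, so $\sum_{i=1}^{D}i\,A_iv=\sum_{i=0}^{d}\bigl(i+(D-i)\bigr)A_iv=D\sum_{i=0}^{d}A_iv=\tfrac{D}{2}\sum_{i=0}^{D}A_iv=\tfrac{D}{2}Jv=0$ (with the obvious adjustment of the middle term when $D=2d$), using that $v$ is orthogonal to the all-ones vector. Since the folded graph has $\lfloor D/2\rfloor$ non-principal eigenvalues, this yields $0$ as a distance eigenvalue of multiplicity at least $\lfloor D/2\rfloor$ and hence the stated bound. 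In the paper this computation is phrased with the standard sequence $(1,u_1,\ldots,u_D)$, the symmetries $u_i=u_{D-i}$ and $k_i=k_{D-i}$, and the identity $\sum_{i=0}^{D}k_iu_i=0$; the two formulations are equivalent, but the eigenspace you must work in is the one fixed by $A_D$, not the one it negates.
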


\begin{proof}
Let $G$ be an antipodal distance-regular graph with diameter $D \in \{2d,2d+1 \}$ and (adjacency) eigenvalues $k > \theta_1 > \theta_2 > \cdots > \theta_D$. The $q$-distance matrix of $G$ is
\begin{equation*}
  \mathbb{D}_q = A_1 +(1+\frac{1}{q})A_2 +(1 +\frac{1}{q} + \frac{1}{q^2})A_3 + \cdots + (1 +\frac{1}{q} + \cdots + \frac{1}{q^{D-1}})A_D.
\end{equation*}
As the standard sequence of the eigenvalue $\theta \neq k$ is $\textbf{u} =(1 \text{ } u_1 \text{ }  u_2 \text{ } \ldots \text{ } u_D)$, then if $q=1$, for $x \in V(G)$ we have that
\begin{equation*}
  (\mathbb{D}_1\textbf{u})_x = k\textbf{u}_1 +2k_2\textbf{u}_2 + \cdots + Dk_{D}\textbf{u}_D.
\end{equation*}

Since $\theta_2, \theta_4, \ldots, \theta_{2d}$ are the non-trivial eigenvalues of the folded graph of $G$, then $\textbf{u}_{D} =1$ and $\textbf{u}_i = \textbf{u}_{D-i}$ for $i \in \{ 1, 2, \ldots d \}$ (see the paragraph after Proposition 4.2.3 of \cite{BCN}). Thus,
\begin{equation*}
(\mathbb{D}_1\textbf{u})_x=\left\{
 \begin{array}{rl}
(2d+1)k_{2d+1} +(k + 2dk_{2d})\textbf{u}_1 + \cdots + (dk_{d} + (d+1)k_{d+1})\textbf{u}_d, & \text{if } D= 2d +1,\\
2dk_{2d}+(k +(2d-1)k_{2d-1})\textbf{u}_1 + \cdots + ((d-1)k_{d-1} + (d+1)k_{d+1})\textbf{u}_{d-1} + dk_d\textbf{u}_d, & \text{if } D= 2d.
 \end{array}
 \right.
\end{equation*}

If $r=2$, we have $k_D =1$ and $k_i = k_{D-i}$ for $i \in \{ 1, 2, \ldots, d\}$. Thus,
\begin{equation*}
(\mathbb{D}_1\textbf{u})_x=\left\{
 \begin{array}{rl}
(2d+1)(1 +k\textbf{u}_1 +k_2\textbf{u}_2 + \cdots + k_d\textbf{u}_d), & \text{if } D= 2d +1, \\
2d(1 +k\textbf{u}_1 + \cdots + k_{d-1}\textbf{u}_{d-1} + \frac{k_d}{2}\textbf{u}_d), & \text{if } D= 2d.
 \end{array}
 \right.
\end{equation*}
Let $\textbf{u}'_1 =\textbf{u}_1, \ldots, \textbf{u}'_{d-1} =\textbf{u}_{d-1}$, and $\textbf{u}'_d = \textbf{u}_d$ if $D= 2d +1$ and $\textbf{u}'_d = \frac{1}{2}\textbf{u}_d$ if $D= 2d +1$. Then the $\textbf{u}'_i$'s are orthogonal to the eigenvector of $\theta$, for $i \in \{1, 2, \ldots, d\}$ with $\theta \in \{\theta_2, \theta_4, \ldots, \theta_{2d}\}$. So the sum $1 + k_1\textbf{u}'_1 + \cdots + k_d\textbf{u}'_d $ is zero. This shows that $R_1(\theta_2) = R_1(\theta_4) = \cdots = R_1(\theta_{2d})= 0$ for $r=2$.  This shows that $0$ is an eigenvalue of the distance matrix with multiplicity at least $d = \lfloor \frac{D}{2}\rfloor$. Hence $G$ has at most $D +2 - \lfloor \frac{D}{2}\rfloor = \lceil\frac{D}{2}\rceil +2$ distinct distance eigenvalues.
\end{proof}

Besides the halved $(2d)$-cubes, the Johnson graphs $J(2D,D)$, the $D$-cubes, the doubled Odd graphs and the even polygons, the largest diameter of a known distance-regular antipodal 2-cover is 7, and is realized by the double coset graph of the binary Golay code, see \cite[Section 11.3E]{BCN}.


\section*{Declarations}

\subsection*{Declaration of Competing Interest}
The authors declare that there are no competing interests.

\subsection*{Acknowledgements}
J.H. Koolen is partially supported by the National Key R. and D. Program of China (No. 2020YFA0713100),
the National Natural Science Foundation of China (No. 12071454 and No. 12371339), and the Anhui Initiative in Quantum
Information Technologies (No. AHY150000).
M. Abdullah is supported by the Chinese Scholarship Council at USTC, China.
B. Gebremichel is supported by the National Key R. and D. Program of China (No. 2020YFA0713100) and
the Foreign Young Talents Program (No. QN2022200003L).
S. Hayat is supported by UBD Faculty Research Grants
(No. UBD/RSCH/1.4/FICBF(b)/2022/053).

\subsection*{Data availability}
No data was used for the research described in this article.


\end{document}